\newtheorem{theorem}{Theorem}[section]
\newtheorem{lm}[theorem]{Lemma}
\newtheorem{corollary}[theorem]{Corollary}
\newtheorem{pr}[theorem]{Proposition}
\newtheorem{ex}[theorem]{Example}
\DeclareMathOperator{\ch}{char}
\DeclareMathOperator{\GL}{GL}
\DeclareMathOperator{\SL}{SL}
\DeclareMathOperator{\Ber}{Ber}
\DeclareMathOperator{\Sym}{Sym}
\DeclareMathOperator{\Hom}{Hom}
\DeclareMathOperator{\Ext}{Ext}
\DeclareMathOperator{\Lie}{Lie}
\DeclareMathOperator{\trace}{trace}
\begin{document}
\title[Super-representations and  polynomial semi-invariants]{Super-representations of quivers and related polynomial semi-invariants}

\author{V.A.~Bovdi}
\address{Department of Mathematical Sciences, UAEU, Al-Ain, United Arab Emirates}
\email{vbovdi@gmail.com}

\author{A.N.~Zubkov}
\address{Sobolev Institute of Mathematics, Omsk Branch, Pevtzova 13, 644043 Omsk, Russia; Omsk State Technical University, Mira 11, 644050 Omsk, Russia}
\email{a.zubkov@yahoo.com}

\thanks{The work was supported by the UAEU UPAR grant G00002160. The second author was also supported by the RFFI grant 18-07-00526.}

\keywords{general linear supergroup, polynomial semi-invariant, quiver}
\subjclass{20G05,16W55,14M30,16G20,13A50}
\begin{abstract}
We introduce the notion of a super-representation of a quiver. For super-representations of quivers over a field of characteristic zero, we describe the corresponding (super)algebras of polynomial semi-invariants {and polynomial invariants}.
\end{abstract}
\maketitle

\section*{Introduction}

The notion of quiver representations has been introduced in \cite{gab} to provide a framework for a
wide range of linear algebra problems. Two representations of a quiver
are equivalent if they lie in the same orbit under a natural action of the product
of some general linear groups. Therefore, the problem of classifying representations of quivers
is the same as that of describing the orbit spaces of such actions. To approximate these orbit spaces by algebraic varieties, one has to determine polynomial invariants. For arbitrary quivers,
generators and relations of the algebra of polynomial invariants are described in \cite{brprocesi, dom1} in the case of the characteristic zero, and in \cite{don, zub} for the arbitrary characteristic.

Unfortunately, polynomial invariants do not distinguish orbits whose closures intersect nontrivially. Therefore we need to consider polynomial semi-invariants of quiver representations (see also \cite{ki, ka} for an additional motivation). The generators of the algebra of polynomial semi-invariants are described in \cite{domzub, derkweym, schofberg}.

The advent of the ideas of superization and quantization of classical mathematics brought the problem of developing super and quantum analogs of the above theory.

In the current article, we generalize the notion of quiver representations to super-representations of a quiver. More precisely, as in the classical (purely even) setting, a super-representation of a quiver is a collection of super-vector spaces, each of which is attached to a unique vertex of the quiver, and linear (not necessary parity-preserving) maps between them, each of which is assigned to a unique edge of the quiver.

The super-representations of a quiver, considered with parity-preserving (graded) morphisms, form an abelian category. Moreover, each full subcategory of super-representations of a fixed super-dimension can be regarded as an affine superscheme, on which a product of general linear supergroups acts naturally. Combining the main result of \cite{sergeev} with \cite{domzub, derkweym}, we calculate the polynomial semi-invariants {and polynomial invariants} of this action over a ground field of characteristic zero.

It turns out that the algebra of semi-invariants of super-representation of a quiver is rather small. If the quiver is acyclic and there are no extremal vertices for a super-dimension vector, then the algebra of semi-invariants of the corresponding super-representation is trivial. As a rule, such a strange phenomenon does not take place for any representation of quiver except for the case when at most one coordinate of its dimension vector is not zero.

Therefore, in the case of super-representations of quivers, the notion of a rational semi-invariant is more natural than the notion of a polynomial one. Indeed,  even if a quiver does not have extremal vertices with respect to a given super-dimension vector, one can construct a non-scalar (rational) semi-invariant as a Berezinian of some block matrix (see Example \ref{phenomena} later).

The principal problem is that the superalgebra of rational "functions" on the superscheme of super-representations of a quiver has a more complicated structure as a supermodule over
the corresponding product of general linear supergroups. In the forthcoming article, we are going to investigate rational semi-invariants in the simplest case of the Kronecker quiver.

\section{Affine supergroups and their representations}

Let $K$ be a field of characteristic $\ch K=p\neq 2$.

\subsection{Superspaces}

The prefix ``super" is a synonym of ``graded by the group $\mathbb{Z}_2=\{ 0 , 1 \}$".
For example, \emph{super-vector spaces} (or {\it superspaces}) are precisely $\mathbb{Z}_2$-graded vector spaces.
A superspace $V$ decomposes as $V = V_0\oplus V_1$, where the elements of $V_0$ are called \emph{even}, and elements of $V_1$ are called \emph{odd}. {We define a {\it parity function} $(V_0\sqcup V_1)\setminus\{0\}\to\{0, 1\}$ by $v\mapsto |v|=i$, provided $v\in V_i\setminus\{0\}$, $i\in\{0, 1\}$.}

The ordered pair $\dim V_0 |\dim V_1$ of non-negative integers is called the \emph{super-dimension} of $V$ and will be denoted by $s\dim V$.

The superspaces, together with parity-preserving (graded) morphisms, form the symmetric tensor category $\mathsf{SSpace}_K$. More precisely, if $V$ and $W$ are superspaces, then
the superspace structure on $V\otimes W$ is given as
\[
(V\otimes W)_{0}=V_0\otimes W_0 + V_1\otimes W_1, \qquad (V\otimes W)_{1}=V_0\otimes W_1 + V_1\otimes W_0.
\]
For a pair of super-vector spaces $V$ and $W$, the space of (ungraded) linear  maps $\Hom_K(V, W)$ has a natural structure of a super-vector space
given by
\[
\Hom_K(V, W)_i=\{\phi \;\mid\; \phi(V_j)\subseteq W_{i+j\pmod 2}\},
\]
where $i, j=0, 1$.
It follows that 
\[
\Hom_K(V, W)\simeq V^*\otimes W\quad \text{and}\quad  \Hom_K(V, W)_0 =\Hom_{\mathsf{SSpace}_K}(V, W).
\]

If we fix bases in the superspaces $V$ and $W$, then $\Hom_K(V, W)$ is isomorphic to the superspace of $(s+d)\times (s'+d')$ matrices $M_{s|d\times s'|d'}(K)$ with entries from $K$ such that
\[
(M_{s|d\times s'|d'}(K))_0=\left(\begin{array}{cc}
M_{s\times s'}(K) & 0 \\
0 & M_{d\times d'}(K)\end{array}\right)
\]
and
\[
(M_{s|d\times s'|d'}(K))_1=\left(\begin{array}{cc}
0 & M_{s\times d'}(K) \\
M_{d\times s'}(K) & 0\end{array}\right),
\]
where $s\dim V=s|d, s\dim W=s'|d'$.

\subsection{Parity {shift} functor}

The category $\mathsf{SSpace}_K$ admits a {\it parity shift} functor $V\mapsto\Pi V$, where
$(\Pi V)_i=V_{i+1\pmod 2}$ and  $i\in\{0, 1\}$. The functor $\Pi$ is a self-equivalence of the category $\mathsf{SSpace}_K$.
If $V$ is a superspace, then a homogeneous vector $v\in V$, regarded as a vector of $\Pi V$, is also denoted by $\Pi v$.

\subsection{Superalgebras and supermodules}

Let $A$ be an associative superalgebra, i.e. $A$ is a $\mathbb{Z}_2$-graded associative algebra.
A $\mathbb{Z}_2$-graded (left or right) $A$-module is called $A$-supermodule. The category of left (respectively, right) $A$-supermodules, with graded morphisms, is denoted by $A-\mathsf{Smod}$ (respectively, $\mathsf{Smod}-A$).

\subsection{Super-commutative superalgebras}

An associative superalgebra $A$ is cal\-led \emph{super-commutative} if it satisfies the identity
$ab=(-1)^{|a||b|}ba$ for all homogeneous elements $a, b\in A$. The typical example of a super-commutative superalgebra is a {\it super-symmetric} superalgebra $\Sym(V)$ of a superspace $V$. More precisely, $\Sym(V)$ is the quotient of the tensor (super)algebra $T(V)=\oplus_{l\geq 0} V^{\otimes l}$ modulo the (super)ideal generated by the elements
\[
v_1\otimes\cdots\otimes v_i\otimes v_{i+1}\otimes\cdots\otimes v_l-(-1)^{|v_i||v_{i+1}|}v_1\otimes\cdots\otimes v_{i+1}\otimes v_i\otimes\cdots\otimes v_l,
\]
for all {homogeneous vectors} $v_1, \ldots, v_l\in V$, where $1\leq i\leq l-1$ and $l\geq 2$. If $s\dim V = m|n$, then $\Sym(V)$ will be also denoted by $\Sym(m|n)$.

Let $\mathsf{SAlg}_K$ denote the category of super-commutative (unital) $K$-superalgebras with graded morphisms.

\subsection{Affine superschemes}

An {\it affine superscheme}  is a representable functor from the category $\mathsf{SAlg}_K$ to the category of sets $\mathsf{Set}$. Such functor,
say $\mathbb{X}$, is uniquely represented by a super-commutative superalgebra $K[\mathbb{X}]$, where 
\[
\mathbb{X}(A)=\Hom_{\mathsf{SAlg}_K}(K[\mathbb{X}], A), \qquad \forall A\in\mathsf{SAlg}_K.
\]
For example, an {\it affine superspace} $\mathbb{A}^{m|n}$ is represented by a superalgebra $\Sym(V)$, where $s\dim V=m|n$ and $\mathbb{A}^{m|n}(A)=A_0^m\oplus A_1^n$ for $A\in\mathsf{SAlg}_K$.

The superalgebra $K[\mathbb{X}]$ is naturally identified with the set of functor morphisms $Mor(\mathbb{X}, \mathbb{A}^{1|1})$ via
$f(x)=x(f)$, where $f\in K[\mathbb{X}], x\in\mathbb{X}(A)$ and $A\in\mathsf{SAlg}_K$ (cf. \cite{jan}, I.1.3(3)). In other words, the elements from $K[\mathbb{X}]$ are regarded as "functions" \ on the affine superscheme $\mathbb{X}$, and $K[\mathbb{X}]$ is called a {\it coordinate superalgebra} of $\mathbb{X}$. Additionally, the notion of a coordinate superalgebra $K[\mathbb{X}]$ makes sense for any functor $\mathbb{X}$ from $\mathsf{SAlg}_K$ to $\mathsf{Set}$.

Let $\mathbb{X}$ be an afine superscheme. We say that a subfunctor
$\mathbb{Z}\subseteq\mathbb{X}$ is a closed super-subscheme of $\mathbb{X}$, if there is an super-ideal $I$ of $K[\mathbb{X}]$ such that
\[\mathbb{Z}(A)=\{x\in\mathbb{X}(A)\mid x(I)=0\}, \qquad \forall A\in\mathsf{SAlg}_K.\]
\begin{ex}
\emph{Let $V$ be a superspace. Let $\mathbb{V}_a$ denote the functor \; $A\to V\otimes A$,\;  $A\in\mathsf{SAlg}_K$. It is obvious that $\mathbb{V}_a\simeq\mathbb{A}^{m+n|n+m}$, where $s\dim V=m|n$. Moreover, $V\to\mathbb{V}_a$ is a functor from the category $\mathsf{SSpace}_K$ to the category of affine superschemes. Its subfunctor $\mathbb{V}\subseteq\mathbb{V}_a$, such that $\mathbb{V}(A)=(V\otimes A)_0$ for any (super-commutative) superalgebra $A$, is a closed super-subscheme, which isomorphic to $\mathbb{A}^{m|n}$.}
\end{ex}
The functor $\mathbb{M}_{s|d\times s'|d'}$, associated with the superspace $M_{s|d\times s'|d'}(K)$,
is said to be  a \emph{superscheme of $s|d\times s'|d'$ matrices}. We have
\[
(M_{s|d\times s'|d'}(A))_0=\left(\begin{array}{cc}
M_{s\times s'}(A) & 0 \\
0 & M_{d\times d'}(A)\end{array}\right)
\]
and
\[
(M_{s|d\times s'|d'}(A))_1=\left(\begin{array}{cc}
0 & M_{s\times d'}(A) \\
M_{d\times s'}(A) & 0\end{array}\right)
\]
for any $A\in\mathsf{SAlg}_K$.

\subsection{Supergroups and supergroup actions}

An affine superscheme $\mathbb{G}$ is called an {\it affine supergroup} if $K[\mathbb{G}]$ is a Hopf superalgebra. In this case, $\mathbb{G}$ is a group functor, i.e., a functor from the category $\mathsf{SAlg}_K$ to the category of groups $\mathsf{Gr}$. An affine supergroup $\mathbb{G}$ is called \emph{algebraic}, provided $K[\mathbb{G}]$ is finitely generated. For a more detailed introduction to the theory of algebraic supergroups and Lie superalgebras, one can refer to \cite{ccf, kac, man, varadarajan, zub1}.

Let $\mathbb{G}$ be an affine supergroup. We say that $\mathbb{G}$ acts on an affine superscheme $\mathbb{X}$ on the left, if there is a superscheme morphism $\mathbb{G}\times\mathbb{X}\to\mathbb{X}$, such that for any $A\in\mathsf{SAlg}_K$ the map $\mathbb{G}(A)\times\mathbb{X}(A)\to\mathbb{X}(A)$
induces an action of the group $\mathbb{G}(A)$ on the set $\mathbb{X}(A)$. The definition of a right action is symmetric.

\subsection{Supermodules over supergroups}

Let $\mathbb G$ be an algebraic supergroup. The category of left rational $\mathbb G$-supermodules $\mathbb{G}-\mathsf{Smod}$, with graded morphisms,
coincides with the category of right $K[\mathbb{G}]$-supercomodules, with graded morphisms as well.

Equivalently, $V$ is a $\mathbb{G}$-supermodule if and only if $\mathbb{G}$ acts on $\mathbb{V}_a$ on the left, so that for any $A\in\mathsf{SAlg}_K$ the group $\mathbb{G}(A)$ acts on $V\otimes A$ by $A$-linear even automorphisms. Namely, if $V$ is a right $K[\mathbb{G}]$-supercomodule and $\tau_V : V\to V\otimes K[\mathbb{G}]$ is the corresponding comodule map, then every $g\in \mathbb{G}(A)$ acts on $V\otimes A$ by
\[
g(v\otimes a)=\big((\mathsf{id}_V\otimes g)\tau_V(v)\big)a,\qquad  \text{ where } v\in V \text{ and } a\in A.
\]
Note that the super-subscheme $\mathbb{V}$ is $\mathbb{G}$-invariant.

For any $\mathbb{G}$-supermodules $V$ and $W$, the tensor product $V\otimes W$ is a $\mathbb{G}$-supermodule via the diagonal action.
Thus the category $\mathbb{G}-\mathsf{Smod}$ is a symmetric tensor category. Moreover, the functor $\Pi$ induces a self-equivalence on $\mathbb{G}-\mathsf{Smod}$.

Let $V$ be a finite-dimensional $\mathbb G$-supermodule. Then both tensor and symmetric superalgebras $T(V)$ and $\Sym(V)$ are $\mathbb G$-supermodules considered via the diagonal action of $\mathbb{G}$. In particular, the elements of Lie superalgebra $\mathfrak{g}=\Lie(\mathbb{G})$ of $\mathbb{G}$ act on $V^{\otimes l}$ as superderivations, i.e.,
\[
\phi(v_1\otimes\cdots\otimes v_l)=\sum_{1\leq k\leq l}(-1)^{|\phi|(\sum_{1\leq j< i}|v_j|)}v_1\otimes\cdots\otimes\phi(v_i)\otimes\cdots\otimes v_l
\]
for every $\phi\in\mathfrak{g}$.
Elements of $\mathfrak{g}$ act on $\Sym^l(V)$ as
\[
\phi(v_1\cdots v_l)=\sum_{1\leq i\leq l}(-1)^{|\phi|(\sum_{1\leq j< i}|v_j|)}v_1\cdots\phi(v_i)\cdots v_l.
\]
The dual superspace $V^*$ has a uniquely defined structure of a right $K[\mathbb{G}]$-super\-co\-mo\-dule given by $\tau_{V^*}(f)=\sum f_{(1)}\otimes f_{(2)}$, where
\[\sum (-1)^{|f_{(2)}||v_{(1)}|}f_{(1)}(v_{(1)})f_{(2)}v_{(2)}=f(v), \forall v\in V \ \mbox{and} \ \tau_V(v)=\sum v_{(1)}\otimes v_{(2)}.\]

\subsection{General linear and special linear supergroups}

Let $K[x_{ij} \mid 1\leq i, j\leq m+n]$ be a polynomial superalgebra, freely generated by $(m+n)^2$  variables $x_{ij}$, where
$|x_{ij}|=0$ if $1\leq i, j\leq m$ or $m+1\leq i, j\leq m+n$, and $|x_{ij}|=1$ otherwise. In other words, $K[x_{ij} \mid 1\leq i, j\leq m+n]\simeq \Sym(W)$, where $W$ is a superspace with basis $x_{ij}, 1\leq i, j\leq m+n$.

The {\it generic} matrix $X=X(m|n)$ has a block form
\[
X=\left(\begin{array}{cc}
X_{00} & X_{01} \\
X_{10} & X_{11}\end{array}\right),
\]
where
\[
X_{00}=(x_{ij})_{1\leq i, j\leq m}, \qquad\qquad X_{01}=(x_{ij})_{1\leq i\leq m < j\leq m+n},
\]
\[
X_{10}=(x_{ij})_{1\leq j\leq m< i\leq m+n}, \qquad X_{11}=(x_{ij})_{m+1\leq i, j\leq m+n}.
\]
Denote by $str(X)=\trace(X_{00})-\trace(X_{11})$ the supertrace of $X$.

The general linear supergroup $\GL(m|n)$ is represented by the Hopf superalgebra
\[K[\GL(m|n)]=K[x_{ij}\mid 1\leq i, j\leq m+n]_{\det(X_{00})\det(X_{11})},\]
{the localization of $K[x_{ij}\mid 1\leq i, j\leq m+n]$ with respect to $\det(X_{00})\det(X_{11})$.} The super-bialgebra structure on $K[\GL(m|n)]$ is given by
\[
\Delta(x_{ij})=\sum_{1\leq k\leq m+n}x_{ik}\otimes x_{kj},\qquad  \epsilon(x_{ij})=\delta_{ij}, \quad \text{ where } 1\leq i, j\leq m+n,
\]
and the antipode $s$ defined as follows. Denote $y_{ij}=s(x_{ij})$. If $Y$ is the matrix
$(y_{ij})_{1\leq i, j\leq m+n}$,  then
\[Y=\left(\begin{array}{cc}
(X_{00} -X_{01}X_{11}^{-1}X_{10})^{-1} & -X_{00}^{-1}X_{01}(X_{11}-X_{10}X_{00}^{-1}X_{01})^{-1} \\
-X_{11}^{-1}X_{10}(X_{00} -X_{01}X_{11}^{-1}X_{10})^{-1} & (X_{11}-X_{10}X_{00}^{-1}X_{01})^{-1} \end{array}\right).\]

The {\it Berezinian} $\Ber_{m|n}(X)$ is a rational function given by
\[
\Ber_{m|n}(X)=\det(X_{00}-X_{01}X_{11}^{-1}X_{10})\det(X_{11})^{-1}.
\]
The element $\Ber_{m|n}(X)$ is group-like, that is
\[\Delta(\Ber_{m|n}(X))=\Ber_{m|n}(X)\otimes \Ber_{m|n}(X).\]
Equivalently, the map $g\mapsto \Ber_{m|n}(g)$, where $g\in \GL(m|n)(A)$ and $A\in\mathsf{SAlg}_K,$ is a character of the supergroup $\GL(m|n)$.

If $n=0$ ($m=0$, respectively), then $\Ber(X)=\det(X)$ \; \big($\Ber(X)=\det(X)^{-1}$, respectively\big).
Additionally, the coordinate (super)algebra $K[\GL(m|n)]$ is isomorphic to the localization $K[x_{ij} \mid 1\leq i, j\leq m+n]_{\Ber_{m|n}(X)}$.

As a group functor, $\GL(m|n)$ can also be defined as follows. Let $V$ be a super-vector space of super-dimension $m|n$. For any super-commutative superalgebra $A$, the group $\GL(m|n)(A)$ consists of all even $A$-linear automorphisms of super-vector space $V\otimes A$. In particular, $V$ has a {\it canonical} $\GL(V)$-supermodule structure given by
\[
v_i\mapsto\sum_{1\leq j\leq m+n} v_j\otimes x_{ji}, \quad \text{ where }\quad  1\leq i\leq m+n,
\]
with respect to a homogeneous basis $v_1, \ldots, v_{m+n}$ such that $|v_i|=0$ for $1\leq i\leq m$, and $|v_i|=1$ otherwise. Using this interpretation, we will denote $\GL(m|n)$ by $\GL(V)$.

Observe also that there is a one-to-one correspondence between homomorphisms of algebraic supergroups
$\mathbb{G}\to \GL(V)$ and $\mathbb{G}$-supermodule structures on $V$.

The special linear supergroup $\SL(V)=\SL(m|n)$ is a closed super-subgroup of $\GL(m|n)$ given by
$\SL(m|n)(A)=\{g\in \GL(m|n)(A) \mid \Ber_{m|n}(g)=1\}$ for any superalgebra $A$.

\subsection{Symmetric group}

For a non-negative integer $n$, let $[1, n]$ denote the interval $\{1,\ldots, n\}$. Let $S_n$ denote the symmetric group, consisting of all permutations of $[1, n]$.

\section{Super-representations of a quiver}

\subsection{Standard setting}

Let $Q=(Q_0, Q_1, t, h)$ be a quiver with the vertex set $Q_0$, the edge set $Q_1$, and functions $t, h : Q_1 \to Q_0$ assigning to an edge $e\in Q_1$ its {\it tail} $t(e)$ and {\it head} $h(e)$. We assume that all quivers are finite, i.e., their vertex sets and edge sets are finite.

A {\it path} $p$ in $Q$ is an ordered sequence of edges, say $e_1, \ldots, e_k$, such that $h(e_{i+1})=t(e_i)$, $1\leq i\leq k-1$. Additionally, if $h(e_1)=t(e_k)$, then $p$ is called a {\it closed path}.

A {\it super-representation} $V$ of $Q$ consists of super-vector spaces $V(a)$ for $a\in Q_0$ and (ungraded) linear morphisms $V(e)\in \Hom_K(V(t(e)), V(h(e)))$ for $e\in Q_1$. If $V$ and $W$ are two super-representations of $Q$, then a morphism $\phi : V\to W$ is a collection of superspace morphisms $\phi(a) : V(a)\to W(a)$ for $a\in Q_0$ such that for every $e\in Q_1$ there is $W(e)\phi(t(e))=\phi(h(e))V(e)$.

Let $\widehat{Q}$ denote an \emph{edge doubled} quiver with 
\[
\widehat{Q}_0=Q_0, \quad \widehat{Q}_1=\{e_0\mid e\in Q_1\}\sqcup \{e_1\mid e\in Q_1\}.
\] 
Moreover, for any $e\in Q_1$ there is $h(e_i)=h(e_i)$, $t(e_i)=t(e_i)$, $i\in\{0, 1\}$. Then the \emph{path algebra} $K\widehat{Q}$ has the unique superalgebra structure, such that $|e_i|=i$, $e\in Q_1$, $i\in\{0, 1\}$, and for any $a\in Q_0$ the idempotent $e_a$ is even.
\begin{lm}\label{representations and supermodules}
The abelian category $\mathsf{SRep}(Q)$ of super-representations of the quiver $Q$ is equivalent to
the category of $K\widehat{Q}-\mathsf{Smod}$.
\end{lm}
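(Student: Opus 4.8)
The plan is to exhibit mutually inverse functors between $\mathsf{SRep}(Q)$ and $K\widehat{Q}-\mathsf{Smod}$, so that the two categories are in fact isomorphic (hence equivalent). The only genuinely ``super'' point is the bookkeeping of the $\mathbb{Z}_2$-grading: an edge $e\in Q_1$ carries an \emph{ungraded} map $V(e)$, which splits uniquely as $V(e)=V(e)_0+V(e)_1$ with $V(e)_i\in\Hom_K(V(t(e)),V(h(e)))_i$, and this is precisely the data recorded by the two doubled edges $e_0$ (parity $0$) and $e_1$ (parity $1$). Everything else is the classical correspondence between quiver representations and path-algebra modules, carried out gradedly; note that the path algebra $K\widehat{Q}$ is sign-free, since concatenation of paths carries no Koszul sign, so no sign subtleties arise.

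First I would build the functor $M\colon\mathsf{SRep}(Q)\to K\widehat{Q}-\mathsf{Smod}$. Given a super-representation $V$, set $M(V)=\bigoplus_{a\in Q_0}V(a)$ with its induced $\mathbb{Z}_2$-grading. Let the vertex idempotent $e_a$ act as the projection of $M(V)$ onto the summand $V(a)$; since each $V(a)$ is a sub-superspace, these are even orthogonal idempotents with $\sum_{a}e_a=1$. For $e\in Q_1$ let $e_i$ act by $V(e)_i$ on the summand $V(t(e))$ and by $0$ on all other summands, $i\in\{0,1\}$. Because $V(e)_i$ has parity $i$ and $|e_i|=i$, the action of $K\widehat{Q}$ respects the grading, and one checks on the generators $\{e_a\}\cup\{e_i\}$ that the defining relations of $K\widehat{Q}$ (orthogonality of the $e_a$, together with $e_i=e_{h(e)}e_ie_{t(e)}$) are satisfied; hence $M(V)$ is a $K\widehat{Q}$-supermodule. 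If $\phi\colon V\to W$ is a morphism of super-representations, then $\bigoplus_a\phi(a)\colon M(V)\to M(W)$ is even, commutes with the $e_a$ by construction, and commutes with each $e_i$ because, $\phi(a)$ being graded, the identity $W(e)\phi(t(e))=\phi(h(e))V(e)$ splits into $W(e)_i\phi(t(e))=\phi(h(e))V(e)_i$ for $i\in\{0,1\}$. So $M$ is a functor.

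Conversely, given a $K\widehat{Q}$-supermodule $M$, the orthogonal even idempotents $e_a$ with $\sum_a e_a=1$ yield a decomposition $M=\bigoplus_{a\in Q_0}e_aM$ into sub-superspaces; put $N(M)(a)=e_aM$. For $e\in Q_1$, using $e_i=e_{h(e)}e_ie_{t(e)}$, the element $e_0+e_1$ acts as a linear map $e_{t(e)}M\to e_{h(e)}M$, which I declare to be $N(M)(e)$; its parity-$i$ component is the action of $e_i$, which is an even (resp.\ odd) operator for $i=0$ (resp.\ $i=1$) precisely because $M$ is a supermodule. A graded $K\widehat{Q}$-homomorphism $\psi\colon M\to M'$ preserves each $e_aM$ and intertwines the actions of $e_0,e_1$, hence restricts to a morphism $N(M)\to N(M')$; so $N$ is a functor $K\widehat{Q}-\mathsf{Smod}\to\mathsf{SRep}(Q)$.

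Finally I would check $N\circ M=\mathrm{id}$ and $M\circ N=\mathrm{id}$ on the nose. Starting from $V$: $e_aM(V)=V(a)$, and the action of $e_0+e_1$ on $V(t(e))$ is $V(e)_0+V(e)_1=V(e)$, so $N(M(V))=V$, and likewise on morphisms. Starting from $M$: $M(N(M))=\bigoplus_a e_aM=M$ as a superspace, with $e_a$ acting as the projection onto $e_aM$ (which is how $e_a$ already acts on $M$) and $e_i$ acting as the parity-$i$ part of $e_0+e_1$, i.e.\ as $e_i$ itself; again this agrees on morphisms. Thus $M$ and $N$ are mutually inverse isomorphisms of categories, in particular an equivalence; since $K\widehat{Q}-\mathsf{Smod}$ is abelian, so is $\mathsf{SRep}(Q)$. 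I expect the main obstacle to be nothing deep but rather the grading bookkeeping of the two middle paragraphs — confirming that the splitting $V(e)=V(e)_0+V(e)_1$ matches the doubled edges $e_0,e_1$ and that ``graded morphism'' on each side corresponds, under the functors, to the other; once the conventions on $|e_i|$ and on the products $e_{h(e)}e_ie_{t(e)}$ are fixed, this is routine.
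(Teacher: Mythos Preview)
Your proof is correct and follows essentially the same approach as the paper: the paper defines the functor $K\widehat{Q}-\mathsf{Smod}\to\mathsf{SRep}(Q)$ by $V(a)=e_aV$ and $V(e)=e_0|_{e_{t(e)}V}\oplus e_1|_{e_{t(e)}V}$, which is precisely your functor $N$, and then leaves ``the routine checking that this is an equivalence of categories'' to the reader. You have simply supplied that routine checking in full, constructing the inverse functor $M$ explicitly and verifying $N\circ M=\mathrm{id}$ and $M\circ N=\mathrm{id}$.
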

\begin{proof}
The functor $K\widehat{Q}-\mathsf{Smod}\to\mathsf{SRep}(Q)$ associates with a  $K\widehat{Q}$-supermodule $V$ a super-representation, denoted by the same symbol $V$, such that $V(a)=e_a V, a\in Q_0,$ and $V(e)=e_0|_{e_{t(e)}V}\oplus e_1|_{e_{t(e)}V}$ for any $e\in Q_1$. We leave for the reader the routine checking that this is an equivalence of categories.
\end{proof}
For any $V\in K\widehat{Q}-\mathsf{Smod}$ we have a standard exact sequence
\[
\begin{split}
0\to\oplus_{f\in\widehat{Q}_1}K\widehat{Q}e_{h(f)}\otimes e_{t(f)}\Pi^{|f|}V&\to\oplus_{a\in\widehat{Q}_0}K\widehat{Q}e_a\otimes e_a V\to V\to 0,\\
r\otimes \Pi^{|f|}v&\mapsto rf\otimes v-r\otimes fv, \quad  \\
s\otimes w\mapsto sw, \quad r\in K\widehat{Q}e_{h(f)}, \quad &s\in K\widehat{Q}e_a, \quad v\in e_{t(f)}V,\quad  w\in e_a V,
\end{split}
\]
which is obviously a projective resolution in $K\widehat{Q}-\mathsf{Smod}$, hence in $\mathsf{SRep}(Q)$ as well (see \cite{craw, derkweym, schof} for more details). In particular, the global dimension of $\mathsf{SRep}(Q)$ is at most $1$, that is for all $V, W\in\mathsf{SRep}(Q)$ and $i\geq 2$, we have $\Ext_{\mathsf{SRep}(Q)}^i(V, W)=0$.

Analogously to the classical setting, let $s\underline{\dim} V$ denote the {\it super-dimension vector} of $V$ defined by
$(s\underline{\dim}V)(a)=s\dim V(a)$ for $a\in Q_0$. If $\alpha$ and $\beta$ are two super-dimension vectors, then the {\it Ringel} super-form $\langle \alpha, \beta\rangle$ is defined as
\[
\begin{split}
\sum_{a\in Q_0}\big(\alpha(a)_0\beta(a)_0&+\alpha(a)_1\beta(a)_1\big)\\
&-\sum_{e\in Q_1}\big(\alpha(t(e))_0 + \alpha(t(e))_1)(\alpha(h(e))_0+\alpha(h(e))_1).
\end{split}
\]
\begin{pr}\label{Ringel formula}
Let $V$ and $W$ are finite dimensional super-representations of $Q$. Then we have
\[\langle s\underline{\dim} V, s\underline{\dim} W\rangle=\dim \Hom_{\mathsf{SRep}(Q)}(V, W)-\dim \Ext_{\mathsf{SRep}(Q)}(V, W).
\]
\end{pr}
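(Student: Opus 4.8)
The plan is to apply $\Hom_{\mathsf{SRep}(Q)}(-,W)$ to the standard projective resolution displayed just before the proposition and read off the Euler characteristic. Abbreviate
\[
P_1=\bigoplus_{f\in\widehat{Q}_1}K\widehat{Q}e_{h(f)}\otimes e_{t(f)}\Pi^{|f|}V,\qquad P_0=\bigoplus_{a\in\widehat{Q}_0}K\widehat{Q}e_a\otimes e_a V,
\]
so that $0\to P_1\to P_0\to V\to 0$ is a projective resolution in $\mathsf{SRep}(Q)\simeq K\widehat{Q}-\mathsf{Smod}$ by Lemma~\ref{representations and supermodules}. Since $P_0$ is projective, $\Ext^1_{\mathsf{SRep}(Q)}(P_0,W)=0$, and (using that all higher $\Ext$ vanish) applying $\Hom_{\mathsf{SRep}(Q)}(-,W)$ yields the four-term exact sequence
\[
0\to\Hom_{\mathsf{SRep}(Q)}(V,W)\to\Hom_{\mathsf{SRep}(Q)}(P_0,W)\to\Hom_{\mathsf{SRep}(Q)}(P_1,W)\to\Ext_{\mathsf{SRep}(Q)}(V,W)\to 0.
\]
The two middle terms are finite-dimensional, even though $P_0,P_1$ themselves need not be, because $W$ is; so the alternating sum of dimensions vanishes, giving
\[
\dim\Hom_{\mathsf{SRep}(Q)}(V,W)-\dim\Ext_{\mathsf{SRep}(Q)}(V,W)=\dim\Hom_{\mathsf{SRep}(Q)}(P_0,W)-\dim\Hom_{\mathsf{SRep}(Q)}(P_1,W).
\]

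Next I would record the computational lemma: for any vertex $b$ and any finite-dimensional superspace $U$,
\[
\Hom_{\mathsf{SRep}(Q)}(K\widehat{Q}e_b\otimes U,\,W)\;\cong\;\Hom_{\mathsf{SSpace}_K}(U,\,W(b))=\bigl(\Hom_K(U,W(b))\bigr)_0 ,
\]
which follows from the tensor-hom adjunction between $K\widehat{Q}e_b\otimes(-)$ and the $b$-component functor $W\mapsto e_bW=W(b)$, together with the identification $\Hom_{K\widehat{Q}}(K\widehat{Q}e_b,W)\cong W(b)$, $\phi\mapsto\phi(e_b)$, which is grading-preserving because $e_b$ is even. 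Hence $\dim\Hom_{\mathsf{SRep}(Q)}(K\widehat{Q}e_b\otimes U,W)=\dim U_0\cdot\dim W(b)_0+\dim U_1\cdot\dim W(b)_1$. Taking $b=a$ and $U=e_aV=V(a)$ and writing $\alpha=s\underline{\dim}V$, $\beta=s\underline{\dim}W$, one gets
\[
\dim\Hom_{\mathsf{SRep}(Q)}(P_0,W)=\sum_{a\in Q_0}\bigl(\alpha(a)_0\beta(a)_0+\alpha(a)_1\beta(a)_1\bigr),
\]
the first sum in the definition of $\langle\alpha,\beta\rangle$.

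For $P_1$ I would split $\widehat{Q}_1$ into the copies $e_0$ and $e_1$ of each $e\in Q_1$. The summand indexed by $e_0$ is $K\widehat{Q}e_{h(e)}\otimes V(t(e))$ and contributes $\alpha(t(e))_0\beta(h(e))_0+\alpha(t(e))_1\beta(h(e))_1$; the summand indexed by $e_1$ is $K\widehat{Q}e_{h(e)}\otimes\Pi V(t(e))$, and since $(\Pi V(t(e)))_0=V(t(e))_1$ and $(\Pi V(t(e)))_1=V(t(e))_0$, it contributes $\alpha(t(e))_1\beta(h(e))_0+\alpha(t(e))_0\beta(h(e))_1$. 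The two add up to $(\alpha(t(e))_0+\alpha(t(e))_1)(\beta(h(e))_0+\beta(h(e))_1)$, and summing over $e\in Q_1$ recovers the second sum in the definition of $\langle\alpha,\beta\rangle$ (with $\beta$ in the head-vertex entries). Substituting both counts into the alternating-sum identity finishes the proof.

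The main obstacle here is bookkeeping rather than anything deep. One must be scrupulous that $\Hom_{\mathsf{SRep}(Q)}$ denotes the parity-preserving Hom, so that the adjunction above lands in the \emph{even} part $\bigl(\Hom_K(U,W(b))\bigr)_0$ and not in the full ungraded $\Hom_K(U,W(b))$; and one must track the parity shifts $\Pi^{|f|}$ occurring in $P_1$, since precisely these shifts turn the contribution of an odd edge into $\alpha(t(e))_1\beta(h(e))_0+\alpha(t(e))_0\beta(h(e))_1$, so that after summing the even and odd copies one obtains the ``total-dimension times total-dimension'' factor appearing in the Ringel super-form. Beyond these conventions, the argument is the classical computation of the Euler form of a quiver transcribed into the super setting.
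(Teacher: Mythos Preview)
Your proof is correct and follows essentially the same approach as the paper's: apply $\Hom_{\mathsf{SRep}(Q)}(-,W)$ to the standard projective resolution, identify the two middle terms via the adjunction $\Hom_{\mathsf{SRep}(Q)}(K\widehat{Q}e_b\otimes U,W)\cong\Hom_{\mathsf{SSpace}_K}(U,e_bW)$, and take the alternating sum of dimensions. The paper stops after recording these identifications and declares the rest obvious; you supply the explicit dimension count and the bookkeeping of the parity shifts $\Pi^{|f|}$, which is a welcome elaboration but not a different argument.
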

\begin{proof}
Using the above projective resolution, one obtains
\[0\to\Hom_{\mathsf{SRep}(Q)}(V, W)\to\Hom_{\mathsf{SRep}(Q)}(\oplus_{a\in\widehat{Q}_0}K\widehat{Q}e_a\otimes e_a V, W)\to\]
\[\Hom_{\mathsf{SRep}(Q)}(\oplus_{f\in\widehat{Q}_1}K\widehat{Q}e_{h(f)}\otimes e_{t(f)}\Pi^{|f|}V, W)\to\Ext^1_{\mathsf{SRep}(Q)}(V, W)\to 0.\]
Since
\[\Hom_{\mathsf{SRep}(Q)}(\oplus_{a\in\widehat{Q}_0}K\widehat{Q}e_a\otimes e_a V, W)\simeq
\oplus_{a\in\widehat{Q}_0}\Hom_{\mathsf{SSpace}_K}(e_aV, e_a W)\]
and
\[
\begin{split}
\Hom_{\mathsf{SRep}(Q)}(\oplus_{f\in\widehat{Q}_1}K\widehat{Q}e_{h(f)}&\otimes e_{t(f)}\Pi^{|f|}V, W)\\
&\simeq
\oplus_{f\in\widehat{Q}_1}\Hom_{\mathsf{SSpace}_K}(e_{h(f)}\Pi^{|f|}V, e_{t(f)}W),
\end{split}
\]
the statement obviously follows.
\end{proof}

\subsection{Another equivalence}
The abelian category $\mathsf{SRep}(Q)$ of super-rep\-re\-sen\-ta\-tions of the quiver $Q$ is equivalent to the category of representations of an \emph{edge and vertex doubled}  quiver $\widetilde{Q}$ constructed as follows. Set $\widetilde{Q}_0=\{a_i \mid a\in Q_0, i=0,1\}$, $\widetilde{Q}_1=\{e_{ij}\mid e\in Q_1, i,j=0, 1\}$, $h(e_{ij})=h(e)_j$ and $t(e_{ij})=t(e)_i$. The equivalence $\mathsf{SRep}(Q)\simeq\mathsf{Rep}(\widetilde{Q})$ is given by $V\mapsto\widetilde{V}$, where $\widetilde{V}(a_i)=V(a)_i$ and $\widetilde{V}(e_{ij})=V(e)_{ij}$ for $a\in Q_0, e\in Q_1$ and $i, j=0, 1$. Since
$\langle s\underline{\dim} V, s\underline{\dim} W\rangle=\langle \underline{\dim}\widetilde{V}, \underline{\dim}\widetilde{W}\rangle$, all the above statements follow.

\subsection{Another parity shift functor}
For a {\it parity vector} $b\in\mathbb{Z}_2^{Q_0}$, one can define a self-equivalence \quad $\Pi^b : \mathsf{SRep}(Q)\to\mathsf{SRep}(Q)$ via $V\mapsto\Pi^b V,$ where
$(\Pi^b V)(a)=\Pi^{b(a)}V(a)$ and $(\Pi^b V)(e)=V(e)$ for every $a\in Q_0$ and $e\in Q_1$.

If $s\underline{\dim} V=\alpha$, then $\Pi^b V$ has a super-dimension $\Pi^b\alpha$ such that for every $a\in Q_0$ there is $(\Pi^b\alpha)(a)_i=\alpha(a)_{i+b(a)\pmod 2}$ for
$i\in\{0, 1\}$. Furthermore, for any super-dimension vectors $\alpha$ and $\beta$, there holds $\langle\alpha, \beta\rangle=\langle\Pi^b\alpha, \Pi^b\beta\rangle$.

\subsection{Affine superschemes associated with super-representations of quivers}

For each $a\in Q_0$, fix a basis of $V(a)$, say $v_i(a), 1\leq i\leq \alpha(a)_0+\alpha(a)_1$, such that $|v_i(a)|=0$ if $1\leq i\leq\alpha(a)_0$, and $|v_i(a)|=1$ otherwise.
The dual basis of $V(a)^*$ consists of the vectors $v_i^*(a)$ such that $v_i^*(a)(v_j(a))=\delta_{ij}$ for $1\leq i, j\leq \alpha(a)_0+\alpha(a)_1$. Then $\mathsf{SRep}_{\alpha}(Q)$ can be identified with a superspace
\[\oplus_{e\in Q_1}M_{\alpha(h(e))\times\alpha(t(e))}(K).\]
The supergroup $\GL(\alpha)=\prod_{a\in Q_0}\GL(\alpha(a))$ acts on the associated affine superspace
\[\prod_{e\in Q_1}\mathbb{M}_{\alpha(h(e))\times\alpha(t(e))}\]
by
\[(\prod_{a\in Q_0}g(a))\cdot(\prod_{e\in Q_1}M(e))=\oplus_{e\in Q_1}g(h(e))M(e)g(t(e))^{-1},\] \[\prod_{a\in Q_0}g(a)\in GL(\alpha)(A), \prod_{e\in Q_1}M(e)\in \prod_{e\in Q_1}\mathbb{M}_{\alpha(h(e))\times\alpha(t(e))}(A), A\in\mathsf{SAlg}_K.\]
We denote the superscheme $\prod_{e\in Q_1}\mathbb{M}_{\alpha(h(e))\times\alpha(t(e))}$ by
the same symbol $\mathsf{SRep}_{\alpha}(Q)$, if it does not lead to confusion.

The coordinate superalgebra $K[\mathsf{SRep}_{\alpha}(Q)]$ is isomorphic to the polynomial superalgebra freely generated by the elements
$x_{ij}(e)$,  where
\[
 e\in Q_1, \qquad 1\leq i\leq \alpha(h(e))_0+\alpha(h(e))_1, \qquad  1\leq j\leq \alpha(t(e))_0+\alpha(t(e))_1.
\]
Here $|x_{ij}(e)|=0$ provided $1\leq i\leq \alpha(h(e))_0$ and $1\leq j\leq \alpha(t(e))_0$, or $\alpha(h(e))_0+1\leq i\leq \alpha(h(e))_0+\alpha(h(e))_1$ and $\alpha(t(e))_0+1\leq j\leq \alpha(t(e))_0+\alpha(t(e))_1$; and $|x_{ij}(e)|=1$ otherwise. In what follows, let $X_{\alpha}(e)$ denote the generic matrix
\[
\big(x_{ij}(e)\big)_{1\leq i\leq \alpha(h(e))_0+\alpha(h(e))_1, 1\leq j\leq \alpha(t(e))_0+\alpha(t(e))_1}.
\]

Since $\GL(\alpha)$ acts naturally on the affine superscheme $\mathsf{SRep}_{\alpha}(Q)$, hence this supergroup acts on its coordinate superalgebra as well. Moreover, $K[\mathsf{SRep}_{\alpha}(Q)]$ is isomorphic to
\[\Sym((\oplus_{e\in Q_1}\Hom_K(V(t(e)), V(h(e)))^*)\simeq \Sym(\oplus_{e\in Q_1}V(h(e))^*\otimes V(t(e)))\] as a $\GL(\alpha)$-supermodule. This isomorphism is induced by the map
\[
x_{ij}(e)\mapsto v_i^*\big(h(e)\big)\otimes v_j\big(t(e)\big), \text{ where }  e\in Q_1.
\]
Furthermore, for any parity vector $b$,
the supergroup $\GL(\alpha)$ also acts on the associated affine superscheme $\mathsf{SRep}_{\Pi^b\alpha}(Q)$, so that the superalgebra isomorphism
\[
K[\mathsf{SRep}_{\Pi^b\alpha}(Q)]\simeq \Sym(\oplus_{e\in Q_1}\Pi^{b(h(e))}V(h(e))^*\otimes \Pi^{b(t(e))}V(t(e)))
\]
is an isomorphism of $\GL(\alpha)$-supermodules.
Here, $K[\mathsf{SRep}_{\Pi^b\alpha}(Q)]$ is freely generated by the elements $\Pi^b x_{ij}(e)$ such that
\[
|\Pi^b x_{ij}(e)|=|x_{ij}(e)|+b(h(e))+b(t(e))\pmod 2,
\]
and the above isomorphism is given by
\[
\Pi^b x_{ij}(e)\mapsto \Pi^{b(h(e))}v_i^*(h(e))\otimes\Pi^{b(t(e))}v_j(t(e)).
\]

\section{Polynomial semi-invariants of super-representations of quivers}

Despite the fact that the super-representation theory of $Q$ is a copy of the representation theory of its companion $\widetilde{Q}$, the semi-invariants of super-representations of $Q$ (of a fixed super-dimension) with respect to the action of the corresponding product of general linear supergroups are not related to the semi-invariants of representations of $\widetilde{Q}$ (of the corresponding dimension) with respect to the action of the relevant product of general linear groups.

Moreover, even the definition of semi-invariants of super-representations of a quiver is not immediately apparent and needs extra clarification.

As it has been observed, $K[\mathsf{SRep}_{\Pi^b\alpha}(Q)]$ is a polynomial $\GL(\alpha)$-supermodule. We call a polynomial $f\in K[\mathsf{SRep}_{\Pi^b\alpha}(Q)]$ a {\it semi-invariant} of weight $\chi$, where $\chi$ is a character of the supergroup $\GL(\alpha)$ if
\[
g\cdot (f\otimes 1)=f\otimes \chi(g) \quad \text{ for every }  g\in \GL(\alpha)(A) \text{ and } A\in\mathsf{SAlg}_K.
\]
If $\ch K=0$, then the character group of $\GL(\alpha)$ is generated by the characters
\[
g=\prod_{b\in Q_0} g(b)\mapsto \Ber_{\alpha(a)}(g(a)), \text{ where } g\in \GL(\alpha)(A), A\in\mathsf{SAlg}_K,
\]
and $a$ runs over $Q_0$. Therefore, the super-subalgebra of semi-invariants coincides with $K[\mathsf{SRep}_{\Pi^b\alpha}(Q)]^{\SL(\alpha)}$, where $\SL(\alpha)=\prod_{a\in Q_0}\SL(\alpha(a))$.

On the other hand, if $\ch K=p>0$, then the character group of $\GL(\alpha)$ is generated by the characters
\[
g=\prod_{b\in Q_0}g(b)\mapsto \Ber_{\alpha(a)}(g(a))^u\det(g(a)_{11})^{pv}, 
\]
where  $g\in \GL(\alpha)(A)$, $A\in\mathsf{SAlg}_K$, $u, v=0, 1, u+v\leq 1$ and $a$ runs over $Q_0$ (see Lemma 13.5, \cite{zub2}).
Thus the superalgebra of semi-invariants contains \newline $K[\mathsf{SRep}_{\Pi^b\alpha}(Q)]^{\SL(\alpha)}$ as a super-subalgebra.
In what follows, we denote the superalgebra of semi-invariants by $\mathsf{SSI}_Q(\alpha, b)$ and its super-subalgebra $K[\mathsf{SRep}_{\Pi^b\alpha}(Q)]^{\SL(\alpha)}$ by $\mathsf{SSi}_Q(\alpha, b)$. As it has been already observed, $\mathsf{SSI}_Q(\alpha, b)=\mathsf{SSi}_Q(\alpha, b)$ provided $\ch K=0$.

The superalgebra of polynomial invariants $K[\mathsf{SRep}_{\Pi^b\alpha}(Q)]^{\GL(\alpha)}$
is denoted by $\mathsf{SI}_Q(\alpha, b)$.

The superalgebra $K[\mathsf{SRep}_{\Pi^b\alpha}(Q)]$ is $\mathbb{N}^{Q_1}$-graded in such a way that
each $\Pi^b x_{ij}(e)$ has a multi-degree
\[
\delta_e=(0, \ldots, \underbrace{1}_{e-\mbox{th place}}, \ldots , 0),
\]
where $e\in Q_1, 1\leq i\leq \alpha(t(e))_0+\alpha(t(e))_1$ and $1\leq j\leq \alpha(h(e))_0+\alpha(h(e))_1$. For each
${\bf n}=\sum_{e\in Q_1} n_e\delta_e\in\mathbb{N}^{Q_1}$, the corresponding
homogeneous component
\[
K[\mathsf{SRep}_{\Pi^b\alpha}(Q)]({\bf n})\simeq \otimes_{e\in Q_1}\Sym^{n_e}\big(\Pi^{b(h(e))}V(h(e))^*\otimes\Pi^{b(t(e))}V(t(e))\big)
\]
is a $\GL(\alpha)$-super-submodule of $K[\mathsf{SRep}_{\Pi^b\alpha}(Q)]$. Therefore, the superalgebras
\[
\mathsf{SSI}_Q(\alpha, b), \mathsf{SSi}_Q(\alpha, b)\quad \text{and}\quad \mathsf{SI}_Q(\alpha, b)
\] 
 have the induced $\mathbb{N}^{Q_1}$-gradings.

\section{A description of polynomial semi-invariants of super-representations of quivers}

From now on, $K$ is a field of characteristic zero.

\subsection{Polarization operators}

Let $Q$ be a quiver and $f\in \mathsf{SSI}_Q(\alpha, b)$ be its semi-in\-va\-riant of multi-degree ${\bf n}$. Let $Q(\bf n)$ denote a $\bf n$-{\it polarized} quiver such that $Q({\bf n})_0 =Q_0$ and $Q({\bf n})_1=\sqcup_{e\in Q_1}\{e_1, \ldots , e_{n_e}\}$, where $t(e_i)=t(e), h(e_i)=h(e)$ for $1\leq i\leq n_e$ and $e\in Q_1$. Then
\[
f|_{\Pi^b X_{\alpha}(e)\mapsto\sum_{1\leq i\leq n_e} t(e_i)\Pi^b X_{\alpha}(e_i), e\in Q_1}=
\sum_{{\bf s}}\big(\prod_{e\in Q_1}\prod_{1\leq i\leq n_e}t(e_i)^{s(e_i)}\big) f_{{\bf s}},
\]
where the sum runs over all ${\bf s}\in\mathbb{N}^{Q({\bf n})_1}$ such that $\sum_{1\leq i\leq n_e}
s(e_i) =n_e, e\in Q_1$, and $f_{{\bf s}}$ belongs to $(\mathsf{SSI}_{Q({\bf n})}(\alpha, b))({\bf s})$.
In other words, for every $\bf s$ as above, one can define a {\it polarization operator} $\mathsf{P}_{\bf s} : f\mapsto f_{\bf s}$ that maps $(\mathsf{SSI}_Q(\alpha, b))({\bf n})$ to $(\mathsf{SSI}_{Q({\bf n})}(\alpha, b))({\bf s})$.

Let $T$ be a subset of $Q_1$. The {\it partial linearization} $P_T$ of $f$ in $T$ is given by $\mathsf{P}_T f=f_{{\bf s}}$, where $s(e_i)=1$ for $1\leq i\leq n_e$ when
$e\in T$; and $s(e_1)=n_e$ and $s(e_i)=0$ for $2\leq i\leq n_e$ when $e\notin T$. As in the purely even case, there is the identity
\[
(\star) \qquad \qquad  (\mathsf{P}_T f)|_{\Pi^b X_{\alpha}(e_i)\mapsto\Pi^b X_{\alpha}(e), e\in T, 1\leq i\leq n_e}=\big(\prod_{e\in T} n_e !\big) f.
\]
The partial linearization of $f$ in $Q_1$ is called a {\it complete linearization} of $f$.

The identity $(\star)$ shows that a description of semi-invariants of a given multi-degree
can be derived from a description of multi-linear semi-invariants of the polarized quiver.

\subsection{Sink and source vertices}

For every $a\in Q_0$, let $\mathsf{IN}(a)$ and $\mathsf{OUT}(a)$ denote the sets $\{e\in Q_1\mid h(e)=a\}$ and $\{e\in Q_1\mid t(e)=a\}$, respectively. We denote their cardinalities by $\mathsf{in}(a)$ and $\mathsf{out}(a)$, correspondingly. A vertex $a\in Q_0$ is a {\it source} (a {\it sink}, respectively) if  $\mathsf{in}(a)=0$
($\mathsf{out}(a)=0$, respectively).

Let $\mathsf{SOURCE}(Q)$ and $\mathsf{SINK}(Q)$ denote the subsets of $Q_0$, consisting of sources and sinks respectively.

\subsection{Extremal and ordinary vertices}

For each $a\in Q_0$ denote $d_{\alpha}(a)=\alpha(a)_0+\alpha(a)_1$.
A vertex $a\in Q_0$ is {\it extremal} for the superdimension vector $\alpha$, provided
$\alpha(a)_0\alpha(a)_1=0$. Otherwise, $a$ is {\it ordinary}.

\subsection{Typical (semi)invariants}

If $p$ is a path in the quiver $Q$, consisting of the edges $e_1, \ldots, e_k$, then denote by $\Pi^b X_{\alpha}(p)$ the matrix
 $\Pi^b X_{\alpha}(e_k)\cdots\Pi^b X_{\alpha}(e_1)$. The subindex $\alpha$ is omitted if it does not lead to confusion.

If $p$ is a closed path, then the supertrace  $\mathrm{str}(\Pi^b X(p))$ belongs to $\mathsf{SI}_Q(\alpha, b)\subseteq\mathsf{SSI}_Q(\alpha, b)$.

Besides such supertraces, there are other semi-invariants which can be constructed in the same way as in \S 4 of \cite{domzub}.
We simplify the definitions from \cite{domzub}, since we are not going to calculate the basis of the superalgebra of semi-invariants as a vector space.

Let $A=\{a_1, \ldots, a_k\}$ and $C=\{c_1, \ldots, c_l\}$, respectively, be arbitrary subsets of $\mathsf{SINK}(Q)$ and $\mathsf{SOURCE}(Q)$, respectively. Assume additionally, that both $A$ and $C$ consist of extremal vertices and all superspaces $\Pi^{b(a_i)}V(a_i)$ and $\Pi^{b(c_j)}V(c_j)$ are purely even.

Let $P$ be a collection of paths in $Q$ such that the terminal and initial points of each path from $P$ belong to $A$ and $C$, respectively. Fix two collections of positive integers, say $\{q_1, \ldots, q_k\}$ and $\{r_1, \ldots, r_l\}$, such that
\[
t=\sum_{1\leq i\leq k}d_{\alpha}(a_i) q_i=\sum_{1\leq j\leq l}d_{\alpha}(c_j) r_j .
\]
We construct a $t\times t$ matrix $X$, partitioned into $(q_1+\cdots+ q_k)\times (r_1+\cdots+ r_l)$ blocks. The size of the block at the position $(a,b)$ is $d_{\alpha}(a_i)\times d_{\alpha}(c_j)$, where $a$ and $b$ satisfy the inequalities
\[
\begin{split}
q_1+\cdots +q_{i-1}+1\leq &a\leq q_1+\cdots+q_i,\\
r_1+\cdots +r_{j-1}+1\leq &b\leq r_1+\cdots+r_j .
\end{split}
\]
The block at the position $(a,b)$ is chosen as an arbitrary linear combination of matrices $\Pi^b X(p)$,
where $p$ runs over all paths from $P$ starting at $c_j$ and ending at $a_i$.

Since $\det(X)$ is a semi-invariant in $\mathsf{SSI}_Q(\alpha, b)$, every homogeneous component of it is again a semi-invariant in $\mathsf{SSI}_Q(\alpha, b)$. It is called a {\it determinant-like} semi-invariant of $Q$.

\subsection{Sink/source normalizing}

Let $a$ be a vertex of the quiver $Q$, that is neither source nor sink.
We construct the quiver $Q'$ as follows. The vertices of the quiver $Q'$ consist of vertices of $Q$ together with one additional vertex $a'$. Every edge of $Q$ that does not terminate in $a$ is also an edge in $Q'$. Every edge in $Q$ that ends in $a$ is replaced by the edge in $Q'$ starting in the same vertex and terminating in $a'$. Finally, we add an edge $e(a)$ that begins in $a'$ and ends in $a$.

Furthermore, if $\alpha$ is a super-dimension vector corresponding to $Q$, then  let $\alpha'$ denote the super-dimension vector corresponding to $Q'$ such that $\alpha'(c)=\alpha(c)$ for $c\in Q_0$ and $\alpha'(a')=\alpha(a)$. Finally,
define $b'\in\mathbb{Z}_2^{Q'_0}$ by $b'(c)=b(c)$ for every $c\in Q_0$, and $b'(a')=b(a)$.

The above procedure, which replaces the quiver $Q$ by the quiver $Q'$ and the super-dimension vector $\alpha$ by the super-dimension vector $\alpha'$, is called a \emph{sink/source normalizing at the vertex $a$}.

\subsection{Reduction to the case when any extremal vertex is either source or sink}

The multi-linear component of $\mathsf{SSI}_Q(\alpha, b)$ is isomorphic to
\[
(\otimes_{a\in Q_0}((\Pi^{b(a)}V(a)^*)^{\otimes\mathsf{in}(a)}\otimes \Pi^{b(a)}V(a)^{\otimes\mathsf{out}(a)}))^{\SL(\alpha)}.
\]
Since each supergroup $\SL(\alpha)$ is connected, Lemma 9.5 from \cite{zub1} implies
\[
\begin{split}
(\otimes_{a\in Q_0}((\Pi^{b(a)}V(a)^*)^{\otimes\mathsf{in}(a)}\otimes \Pi^{b(a)}V(a)^{\otimes\mathsf{out}(a)}))^{\SL(\alpha)}&\simeq\\
(\otimes_{a\in Q_0}((\Pi^{b(a)}V(a)^*)^{\otimes\mathsf{in}(a)}\otimes \Pi^{b(a)}V(a)^{\otimes\mathsf{out}(a)}))^{\mathfrak{sl}(\alpha)}&\simeq\\
\otimes_{a\in Q_0} ((\Pi^{b(a)}V(a)^*)^{\otimes\mathsf{in}(a)}\otimes \Pi^{b(a)}V(a)^{\otimes\mathsf{out}(a)})^{\mathfrak{sl}(\alpha(a))}.&
\end{split}
\]
In the notations from \cite{sergeev}, if $b(a)=0$, then the superspace
\[
\Big((\Pi^{b(a)}V(a)^*)^{\otimes\mathsf{in}(a)}\otimes\Pi^{b(a)}V(a)^{\otimes\mathsf{out}(a)}\Big)^{\mathfrak{sl}(\alpha(a))}
\]
can be identified with the multi-linear component of the superalgebra $\mathfrak{A}^{\mathsf{in}(a), 0}_{\mathsf{out}(a), 0}$. If $b(a)=1$, then it can be identified with the multi-linear component of the superalgebra $\mathfrak{A}^{0, \mathsf{in}(a)}_{0, \mathsf{out}(a)}$.
\begin{pr}\label{reduction}
Let $a$ be an extremal vertex of the quiver $Q$ (with respect to a super-dimension vector $\alpha$). Assume that $\alpha(a)_0=0$ (the case when $\alpha(a)_1=0$ is analogous). Let $Q'$ and $\alpha'$ be obtained by sink/source normalizing.
The superalgebra morphism $\mathsf{SSI}_{Q'}(\alpha', b')\to\mathsf{SSI}_Q(\alpha, b)$, induced by the identity map on $\Pi^{b'}X_{\alpha'}(e)=\Pi^b X_{\alpha}(e)$ for $e\in Q_1$, and which takes $\Pi^{b'} X_{\alpha'}(e(a))=X_{\alpha'}(e(a))$ to the identity matrix $E=E_{\alpha(a)_1\times\alpha(a)_1}$, is surjective.
\end{pr}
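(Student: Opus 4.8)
The plan is to reduce the statement to a surjectivity assertion about invariants of a single general linear supergroup, exactly the way the even case is handled in \cite{domzub}, and then to invoke the reductivity-type input recorded just before the proposition. First I would observe that the new quiver $Q'$ differs from $Q$ only in that every arrow $e$ with $h(e)=a$ has been redirected to terminate at the fresh vertex $a'$ (with $\alpha'(a')=\alpha(a)$, $b'(a')=b(a)$), and that a single extra arrow $e(a):a'\to a$ has been added. Since $\alpha(a)_0=0$, the superspace $\Pi^{b(a)}V(a)$ is purely odd (or purely even after the shift, matching the $b(a)=1$ bookkeeping), and the generic matrix $X_{\alpha'}(e(a))$ is a square matrix of odd or even size $\alpha(a)_1\times\alpha(a)_1$. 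The comparison morphism $\rho:\mathsf{SSI}_{Q'}(\alpha',b')\to\mathsf{SSI}_Q(\alpha,b)$ sends $X_{\alpha'}(e(a))\mapsto E$ and is the identity on all other generic matrices; because the $\GL(\alpha(a'))$-action on $X_{\alpha'}(e(a))$ is by left multiplication (from the head $a$) and right multiplication (from the tail $a'$), substituting $E$ precisely identifies the two copies $\GL(\alpha(a))$ and $\GL(\alpha(a'))$, so $\rho$ is a well-defined superalgebra homomorphism onto its image. It remains to see that it is onto.

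The key reduction is that it suffices to prove surjectivity on each $\mathbb{N}^{Q_1}$-homogeneous component, and then, via the identity $(\star)$ and the polarization operators $\mathsf{P}_{\bf s}$, on the multi-linear component of every polarization $Q({\bf n})$ of $Q$ (and simultaneously of $Q'({\bf n})$). So I may assume $f\in\mathsf{SSI}_Q(\alpha,b)$ is multi-linear. By the displayed decomposition preceding the proposition, the multi-linear semi-invariants of $Q$ split as a tensor product over $a\in Q_0$ of the spaces
\[
U(a)=\big((\Pi^{b(a)}V(a)^*)^{\otimes\mathsf{in}(a)}\otimes\Pi^{b(a)}V(a)^{\otimes\mathsf{out}(a)}\big)^{\mathfrak{sl}(\alpha(a))},
\]
and likewise for $Q'$, the only change being at $a$ and $a'$: in $Q'$ the vertex $a$ has $\mathsf{in}_{Q'}(a)=0$ and $\mathsf{out}_{Q'}(a)=\mathsf{out}_Q(a)$ (through the single arrow $e(a)$ pushed forward), while $a'$ has $\mathsf{in}_{Q'}(a')=\mathsf{in}_Q(a)$ and $\mathsf{out}_{Q'}(a')=1$. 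Thus the whole problem localizes at the vertex $a$: I must show that the natural contraction map, which on the $V(a)$-tensor factors pairs the single $\Pi^{b(a)}V(a)$ coming out along $e(a)$ against one $\Pi^{b(a)}V(a)^*$ (using the identity $E$, i.e. the canonical element of $V(a)^*\otimes V(a)$), sends the $\mathfrak{sl}(\alpha(a))$-invariants of $(\Pi^{b(a)}V(a)^*)^{\otimes\mathsf{in}(a)}\otimes\Pi^{b(a)}V(a)^{\otimes(\mathsf{out}(a)+1)}$ onto those of $(\Pi^{b(a)}V(a)^*)^{\otimes\mathsf{in}(a)}\otimes\Pi^{b(a)}V(a)^{\otimes\mathsf{out}(a)}$. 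This is exactly the statement that the first fundamental theorem description of $\mathfrak{sl}(m|n)$-invariants from \cite{sergeev} — phrased through the superalgebras $\mathfrak{A}^{p,q}_{r,s}$ recalled above, here with one of $(m,n)$ equal to zero so that $\mathfrak{sl}(\alpha(a))$ is an ordinary $\mathfrak{sl}$ (or $\mathfrak{q}$-type in the odd case) acting on purely even tensors — is generated by contractions, and that splitting off one contracted $V^*\otimes V$ pair is surjective. Concretely, given an invariant tensor on the $Q$-side, I would insert the identity $\sum_i v_i(a)\otimes v_i^*(a)$ to produce an invariant on the $Q'$-side whose image under $\rho$ recovers the original.

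The main obstacle, and the only place real work is needed, is the last point: verifying that contracting against $E$ on the super side is still surjective. In the purely even situation this is classical (one uses the first fundamental theorem for $\SL$ and that the relevant invariants are spanned by complete contractions, possibly with a determinant factor), but here one must check that the $\mathbb{Z}_2$-grading and the sign conventions in \cite{sergeev} do not obstruct the argument — in particular that when $b(a)=1$ the identification with the multilinear component of $\mathfrak{A}^{0,\mathsf{in}(a)}_{0,\mathsf{out}(a)}$ is compatible with adjoining one extra creation operator, and that the connectedness of $\SL(\alpha(a))$ (already used to pass from $\SL$-invariants to $\mathfrak{sl}$-invariants) guarantees there are no extra invariants appearing on the $Q$-side that fail to lift. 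Once this local surjectivity at $a$ is in hand, tensoring with the identity maps at all other vertices and reassembling via the polarization identity $(\star)$ finishes the proof; I expect this gluing step to be entirely routine.
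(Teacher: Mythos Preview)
There is a genuine gap in your local formulation. In $Q'$ the vertex $a$ has $\mathsf{in}_{Q'}(a)=1$ (the arrow $e(a)$ enters $a$), not $0$; more importantly, the two vertices $a$ and $a'$ carry \emph{independent} copies of $\SL(\alpha(a))$. Hence the local semi-invariants on the $Q'$ side are $\SL(\alpha(a))\times\SL(\alpha(a'))$-invariants of $W\otimes A$ (with $A=K[x_{ij}(e(a))]$ the full polynomial ring in the entries of the new generic matrix), not diagonal $\mathfrak{sl}(\alpha(a))$-invariants of a single enlarged tensor power. Your proposed lift --- inserting the canonical element $\sum_i v_i\otimes v_i^*$ --- is invariant only for the diagonal copy, not for $\SL\times\SL$, so it does not land in the $Q'$ semi-invariants. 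Worse, degree-$1$-in-$e(a)$ lifts need not exist at all: if $\mathsf{in}_Q(a)=0$ and $\mathsf{out}_Q(a)=d_\alpha(a)=n>1$, the determinant invariant on the $Q$ side has no degree-$1$ preimage, since the tensor factor at $a'$ would have to lie in $V(a')^{\SL(\alpha(a'))}=0$. So both the statement you reduce to and the construction you propose for it are incorrect.

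The paper bypasses this by a transfer argument rather than an explicit lift. One views $W=(\Pi^{b(a)}V(a)^*)^{\otimes\mathsf{in}(a)}\otimes\Pi^{b(a)}V(a)^{\otimes\mathsf{out}(a)}$ as an $\SL(\alpha(a))\times\SL(\alpha(a))$-module (the first factor on the covariant part, the second on the contravariant part) and invokes the Frobenius-type identification
\[
W^{\SL(\alpha(a))}\;\simeq\;\bigl(W\otimes K[(\SL\times\SL)/\SL]\bigr)^{\SL\times\SL}.
\]
The homogeneous space is $\SL(\alpha(a))$ itself, so its coordinate ring is $A/A(\det X(e(a))-1)$. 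Because $\alpha(a)$ is extremal, $\SL(\alpha(a))$ is an honest reductive group, and semisimplicity of rational $\SL\times\SL$-modules gives the surjection
\[
(W\otimes A)^{\SL\times\SL}\twoheadrightarrow\bigl(W\otimes A/A(\det-1)\bigr)^{\SL\times\SL}\simeq W^{\SL(\alpha(a))},
\]
which is precisely $\rho$ on the multi-linear-in-$Q_1$ component after tensoring with the untouched factor $U'^{\SL(\beta)}$. You correctly identified that the work is local at $a$ and that extremality makes the problem classical, but the mechanism that actually delivers surjectivity is reductivity plus transfer, not contraction against a single identity tensor.
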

\begin{proof}
We need to prove that for any multi-linear semi-invariant $f\in \mathsf{SSI}_Q(\alpha, b)$ there is a semi-invariant $g\in \mathsf{SSI}_{Q'}(\alpha', b')$ such that our superalgebra morphism takes $g$ to $f$. As it has been observed, the multi-linear component of $\mathsf{SSI}_Q(\alpha, b)$ is isomorphic to
\[(\otimes_{c\in Q_0} ((\Pi^{b(c)}V(c)^*)^{\otimes\mathsf{in}(c)}\otimes \Pi^{b(c)}V(c)^{\otimes\mathsf{out}(c)}))^{\SL(\alpha)}.\]
Let $\beta$ denote the super-dimension vector $(\alpha(b))_{b\in Q_0\setminus a}$. Let $U$ and $U'$ denote the $\SL(\alpha)$-supermodule
\[\otimes_{c\in Q_0} ((\Pi^{b(c)}V(c)^*)^{\otimes\mathsf{in}(c)}\otimes \Pi^{b(c)}V(c)^{\otimes\mathsf{out}(c)})\]
and the $\SL(\beta)$-supermodule
\[\otimes_{c\in Q_0\setminus a} ((\Pi^{b(c)}V(c)^*)^{\otimes\mathsf{in}(c)}\otimes \Pi^{b(c)}V(c)^{\otimes\mathsf{out}(c)})\]
respectively.

Note that $\SL(\alpha(a))=\SL(\alpha(a)_1)$ is an ordinary special linear group. Moreover, $\Pi^{b(a)}V(a)$ (as well as its dual) can be regarded as an ordinary $\SL(\alpha(a)_1)$-module, no matter if it is purely odd as a superspace.

The tensor factor
\[W=(\Pi^{b(a)}V(a)^*)^{\otimes\mathsf{in}(a)}\otimes \Pi^{b(a)}V(a)^{\otimes\mathsf{out}(a)}\]
can be regarded as a $\SL(\alpha(a))\times\SL(\alpha(a))$-module, where the first factor $\SL(\alpha(a))$ acts diagonally on $(\Pi^{b(a)}V(a)^*)^{\otimes\mathsf{in}(a)}$ and the second factor
$\SL(\alpha(a))$ acts diagonally on $\Pi^{b(a)}V(a)^{\otimes\mathsf{out}(a)}$.
Using Proposition 12, \cite{zubshest} (see also \cite{jan}, I.3.4, I.3.6, or \cite{gross}, Theorem 9.1), one obtains
\[W^{\SL(\alpha(a))}\simeq (W\otimes
K[(\SL(\alpha(a))\times\SL(\alpha(a)))/\SL(\alpha(a))])^{\SL(\alpha(a))\times\SL(\alpha(a))},\]
where $\SL(\alpha(a))$ is diagonally embedded into $\SL(\alpha(a))\times\SL(\alpha(a))$. Besides, this isomorphism is just identity map on $W$ and takes
\[
f\in K[(\SL(\alpha(a))\times\SL(\alpha(a)))/\SL(\alpha(a))]
\]
to $f((E\times E)\SL(\alpha(a)))$.

The quotient
\[(\SL(\alpha(a))\times\SL(\alpha(a)))/\SL(\alpha(a))\]
is isomorphic to $\SL(\alpha(a))$ via
\[
(x, y)\SL(\alpha(a))\mapsto xy^{-1}, \qquad (x, y)\in\SL(\alpha(a))\times\SL(\alpha(a)).\]
Moreover, the group $\SL(\alpha(a))\times\SL(\alpha(a))$ acts on the latter scheme as
\[
(g, h)\cdot x=gxh^{-1}, \qquad (g, h)\in\SL(\alpha(a))\times\SL(\alpha(a)), \quad x\in\SL(\alpha(a)).\]
Therefore, there is a natural isomorphism of $\SL(\alpha(a))\times\SL(\alpha(a))$-modules
\[K[(\SL(\alpha(a))\times\SL(\alpha(a)))/\SL(\alpha(a))]\simeq A/A(\det(X(e(a)))-1),\]
where $A=K[x_{ij}(e(a))\mid 1\leq i, j\leq\alpha(a)_1]$.

Since the category of rational $\SL(\alpha(a))\times\SL(\alpha(a))$-modules is semisimple, we have an epimorphism
\[(W\otimes A)^{\SL(\alpha(a))\times\SL(\alpha(a))}\to (W\otimes A/A(\det(X(e(a)))-1))^{\SL(\alpha(a))\times\SL(\alpha(a))},\]
which induces an epimorphism $(W\otimes A)^{\SL(\alpha(a))\times\SL(\alpha(a))}\to W^{\SL(\alpha(a))}$. Moreover, the latter epimorphism is an identity map on $W$ and takes $X(e(a))$ to $E$.

Multiplying by $U'^{\SL(\beta)}$, we obtain an epimorphism
$(U\otimes A)^{\SL(\alpha')}\to U^{\SL(\alpha)}$, which is an identity map on $U$ and takes $X(e(a))$ to $E$ as well. It remains to note that $(U\otimes A)^{\SL(\alpha')}$ is nothing but the subspace of $\mathsf{SSI}_{Q'}(\alpha', b')$ consisting of all semi-invariants, those have degree one in each $X(e), e\in Q_1$.
Proposition is proven.
\end{proof}
{Applying the above proposition to extremal vertices $a\in Q_0$, one can construct a quiver $\widetilde{Q}$ with a super-dimension vector $\widetilde{\alpha}$ and a parity vector $\widetilde{b}$, such that each extremal vertex of $\widetilde{Q}$ is either a source or a sink. Moreover, the superalgebra morphism
$\mathsf{SSI}_{\widetilde{Q}}(\widetilde{\alpha}, \widetilde{b})\to\mathsf{SSI}_Q(\alpha, b)$, induced by the identity map on $\Pi^{\widetilde{b}}X_{\widetilde{\alpha}}(e)=\Pi^b X_{\alpha}(e)$ for $e\in Q_1$, and which takes $\Pi^{\widetilde{b}} X_{\widetilde{\alpha}}(e)$ for $e\in\widetilde{Q}_1\setminus Q_1$ to the identity matrix of the same size, is surjective.}

{If $f\in \mathsf{SSI}_Q(\alpha, b)$ is the image of a determinant-like invariant of $\widetilde{Q}$, then it is also called determinant-like.}

\subsection{Computing of semi-invariants}

{According to the above remark, we can assume that each extremal vertex of $Q$ is either a source or a sink.} Denote $v_i^*(h(e))$  by $v_i^*|e|$, and  $v_j(t(e))$ by $v_j|e|$.

First of all, assume  $a$ is an ordinary vertex. Theorem 1.2 of \cite{sergeev} states that
\[
\Big((\Pi^{b(a)}V(a)^*)^{\otimes\mathsf{in}(a)}\otimes\Pi^{b(a)}V(a)^{\otimes\mathsf{out}(a)}\Big)^{\mathfrak{sl}(\alpha(a))}\neq 0\]
if and only if $\mathsf{in}(a)=\mathsf{out}(a)$. This equality is called the {\it  Kirchhoff rule}. If $a$ satisfies this rule, then $((\Pi^{b(a)}V(a)^*)^{\otimes\mathsf{in}(a)}\otimes\Pi^{b(a)}V(a)^{\otimes\mathsf{out}(a)})^{\mathfrak{sl}(\alpha(a))}$ is generated (as a vector space) by the products
\[
\otimes_{e\in\mathsf{OUT}(a)}\Big(\sum_{1\leq i\leq \alpha(a)_0+\alpha(a)_1}\Pi^{b(a)}v_i|e|\otimes \Pi^{b(a)}v^*_i|f_a(e)|\Big),
\]
where $f_a : \mathsf{OUT}(a)\to\mathsf{IN}(a)$ is a bijection. The last products can be considered as elements of
$((\Pi^{b(a)}V(a)^*)^{\otimes\mathsf{in}(a)}\otimes\Pi^{b(a)}V(a)^{\otimes\mathsf{out}(a)})^{\mathfrak{sl}(\alpha(a))}$ after a permutation of its tensor factors, taking into account the sign rule.

Assume that $a$ is an extremal vertex that is a source. By Theorem 1.2 of \cite{sergeev}, the tensor factor $(\Pi^{b(a)}V(a)^{\otimes\mathsf{out}(a)})^{\mathfrak{sl}(\alpha(a))}$ is non-zero if and only if $d_{\alpha}(a)$ divides $\mathsf{out}(a)$ and $\Pi^{b(a)}V(a)$ is a purely even superspace.  Moreover, if the latter holds, then the generators of the vector space $(\Pi^{b(a)}V(a)^{\otimes\mathsf{out}(a)})^{\mathfrak{sl}(\alpha(a))}$ can be described as follows.

Denote by $T$ a partition of $\mathsf{OUT}(a)$ into a union of disjoint subsets, each of which has a cardinality
$d_{\alpha}(a)$, say $\mathsf{OUT}(a)=\sqcup_{1\leq j\leq k_{\alpha}(a)} T_j$, where $d_{\alpha}(a)k_{\alpha}(a)=\mathsf{out}(a)$. Then the elements
\[
\otimes_{1\leq j\leq k_{\alpha}(a)}\det\Big((\Pi^{b(a)}v_i|e|)_{1\leq i\leq d_{\alpha}(a), e\in T_j}\Big),\]
corresponding to various partitions $T$ as above,
after reordering of the tensor factors,
generate $(\Pi^{b(a)}V(a)^{\otimes\mathsf{out}(a)})^{\mathfrak{sl}(\alpha(a))}$ as a vector space.

Symmetrically, if $a$ is an extremal sink, then $((\Pi^{b(a)}V(a)^*)^{\otimes\mathsf{in}(a)})^{\mathfrak{sl}(\alpha(a))}\neq 0$ if and only if $d_{\alpha}(a)$ divides $\mathsf{in}(a)$ and $\Pi^{b(a)}V(a)^*$ is a purely even superspace. The generators of the vector space $((\Pi^{b(a)}V(a)^*)^{\otimes\mathsf{in}(a)})^{\mathfrak{sl}(\alpha(a))}$ have the form
\[\otimes_{1\leq j\leq k_{\alpha}(a)}\det((\Pi^{b(a)}v^*_i|e|)_{1\leq i\leq d_{\alpha}(a), e\in T_j}),\]
after reordering of the tensor factors, where each $T_j$ has the cardinality $d_{\alpha}(a)$,
 $d_{\alpha}(a)k_{\alpha}(a)=\mathsf{in}(a)$ and $\sqcup_{1\leq j\leq k_{\alpha}(a)} T_j=\mathsf{IN}(a)$.

Without a loss of generality, one can assume that the quiver $Q$ does not have isolated vertices.
Then the multi-linear component of $\mathsf{SSI}_Q(\alpha, b)$ is generated by the products of the above elements, take one for each vertex. Let us call such products {\it vertex decomposable} semi-invariants.

It is clear that every source and sink is an extremal vertex. On the other hand, any ordinary vertex is neither source nor sink.

Let $\mathsf{IN}(Q)$ denote $\sqcup_{a\in\mathsf{SINK(Q)}}\mathsf{IN}(a)$, and $\mathsf{OUT}(Q)$ denote
$\sqcup_{a\in\mathsf{SOURCE}(Q)}\mathsf{OUT}(a)$.
\begin{theorem}\label{polynomial semi-invariants}
The superalgebra $\mathsf{SSI}_Q(\alpha, b)$ is generated by the invariants $\mathrm{str}(\Pi^b X(p))$, where $p$ runs over all closed paths in $Q$, and by all determinant-like semi-invariants.
\end{theorem}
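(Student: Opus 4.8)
The plan is to follow the strategy of the purely even case \cite{domzub} in three reductions, followed by a combinatorial identification of the vertex‑decomposable semi‑invariants. First I would reduce to multilinear semi‑invariants of polarized quivers. Since $\ch K=0$ we have $\mathsf{SSI}_Q(\alpha,b)=K[\mathsf{SRep}_{\Pi^b\alpha}(Q)]^{\SL(\alpha)}$, which is $\mathbb N^{Q_1}$‑graded, and by the identity $(\star)$ it suffices to show that for every polarized quiver $Q(\mathbf n)$ the multilinear component of $\mathsf{SSI}_{Q(\mathbf n)}(\alpha,b)$ lies in the subalgebra generated by supertraces of closed paths and determinant‑like semi‑invariants: the complete linearization of a semi‑invariant of multi‑degree $\mathbf n$ is multilinear on $Q(\mathbf n)$ and de‑specializes back to a nonzero multiple of it, while the de‑specialization $\Pi^b X_\alpha(e_i)\mapsto\Pi^b X_\alpha(e)$ is an algebra map that sends $\mathrm{str}(\Pi^b X(p))$ to the supertrace of the closed path $p$ (possibly with repeated edges) of $Q$, and sends a determinant‑like semi‑invariant to a determinant‑like one — the matrix blocks $\Pi^b X(p)$ occurring there are even operators between purely even shifted spaces, hence have even entries, so $\det$ commutes with the algebra map. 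Next, by Proposition~\ref{reduction} and the remark after it, I may assume every extremal vertex of $Q$ is a source or a sink: the normalization surjection $\mathsf{SSI}_{\widetilde Q}(\widetilde\alpha,\widetilde b)\to\mathsf{SSI}_Q(\alpha,b)$ sends a supertrace of a closed path of $\widetilde Q$ to the supertrace of the closed path obtained by deleting the inserted identity edges $e(a)$, and by definition sends determinant‑like semi‑invariants of $\widetilde Q$ to determinant‑like semi‑invariants of $Q$. Finally I may discard isolated vertices and, by Theorem~1.2 of \cite{sergeev}, assume every source and sink is extremal with purely even associated shifted space, since otherwise the multilinear component is zero.

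Under these assumptions the multilinear component of $\mathsf{SSI}_Q(\alpha,b)$ is spanned by vertex‑decomposable semi‑invariants — this is precisely the computation carried out just before the theorem: connectedness of $\SL(\alpha)$ (Lemma~9.5 of \cite{zub1}) replaces $\SL(\alpha)$‑invariants by $\mathfrak{sl}(\alpha)$‑invariants, the space decomposes as a tensor product over vertices of local invariant spaces, and Theorem~1.2 of \cite{sergeev} describes these: at an ordinary vertex $a$ the Kirchhoff rule $\mathsf{in}(a)=\mathsf{out}(a)$ must hold and the generators are the super‑identity tensors indexed by bijections $f_a:\mathsf{OUT}(a)\to\mathsf{IN}(a)$; at an extremal source (resp. sink) $a$ one has $d_\alpha(a)\mid\mathsf{out}(a)$ (resp. $d_\alpha(a)\mid\mathsf{in}(a)$) and the generators are tensor products of the determinants $\det((\Pi^{b(a)}v_i|e|)_{i,e\in T_j})$ (resp. with $v^*_i$'s) indexed by partitions into blocks of size $d_\alpha(a)$. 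So it remains to show that each vertex‑decomposable semi‑invariant is a product of supertraces of closed paths and a single determinant‑like semi‑invariant.

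The bijections $f_a$ at ordinary vertices organize $Q_1$ into maximal threads: from an edge $e$ pass forward to $f_{h(e)}^{-1}(e)$ while $h(e)$ is ordinary and backward to $f_{t(e)}(e)$ while $t(e)$ is ordinary; each edge lies in exactly one thread, which is either a closed path through only ordinary vertices (type~I) or a path from a source to a sink through only ordinary vertices in between (type~II), and distinct threads use disjoint edge sets. Contracting the super‑identity tensors around a type~I thread yields $\mathrm{str}(\Pi^b X(p))$ for the corresponding closed path $p$ — the sign incurred by reordering the odd tensor factors is exactly what upgrades the ordinary trace to the supertrace, and the base‑edge ambiguity is absorbed by $\mathrm{str}(YZ)=(-1)^{|Y||Z|}\mathrm{str}(ZY)$. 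The remaining type~II threads, together with the block partitions at the sources and sinks they meet, assemble into a determinant‑like semi‑invariant: take $A$ and $C$ to be the sinks and sources that occur, $q_i=k_\alpha(a_i)$, $r_j=k_\alpha(c_j)$, $P$ the set of source‑to‑sink paths avoiding the type~I edges, and let the block of $X$ indexed by (the $u$‑th block of $a_i$, the $w$‑th block of $c_j$) be a coefficient‑weighted sum of $\Pi^b X(p)$ over $p\in P$ running from $c_j$ to $a_i$ with ends in blocks $w$ and $u$; expanding $\det(X)=\sum_\sigma\mathrm{sgn}(\sigma)\prod_\ell X_{\ell\sigma(\ell)}$ and extracting the part multilinear in all edges gives a sum over the threadings compatible with the fixed partitions, and by varying the block coefficients the individual threading term — which, after matching the within‑block antisymmetrizations at the sinks against the choice of which edge fills which row, equals $\pm$ the type~II part of our vertex‑decomposable semi‑invariant — can be isolated. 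Since the thread edge‑sets are disjoint, the vertex‑decomposable semi‑invariant is the product of these supertraces and this determinant‑like semi‑invariant, and the proof concludes.

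The main obstacle is the sign bookkeeping in the last paragraph: one must verify that the super‑contraction around a type~I closed path really equals $\mathrm{str}(\Pi^b X(p))$ (with the correct sign, independent of the base edge), and that the sign produced by permuting tensor factors into ``edge order'' matches the $\mathrm{sgn}(\sigma)$ of the Leibniz expansion of $\det(X)$, so that the determinant‑like semi‑invariants — and not merely signed variants of them — span the type~II contributions. In the purely even setting this is carried out in \S4 of \cite{domzub}; here one must additionally track the $-1$'s from transposing odd vectors, and I expect the cleanest route is an induction on the number of ordinary vertices, splicing at one ordinary vertex at a time via its bijection $f_a$ and absorbing the sign through super‑cyclicity of the supertrace, the base case (no ordinary vertices) being essentially the first fundamental theorem for $\mathfrak{sl}$‑invariants of \cite{sergeev}.
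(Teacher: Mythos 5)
Your proposal is correct and follows essentially the same route as the paper: polarization to the multilinear case, sink/source normalization via Proposition \ref{reduction}, Sergeev's Theorem 1.2 to reduce to vertex-decomposable semi-invariants, and then splitting the edges into closed paths through ordinary vertices (contracted to supertraces) and source-to-sink paths (assembled into determinant-like semi-invariants). The only divergence is the last step: the paper proceeds by induction on the total degree, peeling off one $F$-related closed path at a time, and in the bipartite case simply reinterprets $F$ as a multilinear semi-invariant of the auxiliary bipartite quiver with arrows $p_e$ and generic matrices $\Pi^b X(p_e)$ --- whose entries are purely even because the source and sink spaces are purely even --- so Theorem 4.1 of \cite{domzub} applies verbatim and the extra super-sign bookkeeping you anticipate in the determinant step (and which could in any case be absorbed into the arbitrary block coefficients) is not needed.
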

\begin{proof}
{As above, one can assume that each extremal vertex of $Q$ is either a source or a sink.} Let us consider a polynomial vertex decomposable multi-linear semi-invariant $F$ of the total degree $t>0$.

If there are no ordinary vertices, then the statement follows from Theorem 4.1 of \cite{domzub}. Otherwise, consider an ordinary vertex $a$.

If there is $e_1\in\mathsf{OUT}(a)$ such that $a'=t(f_a(e_1))$ is ordinary, then we look for $e_2\in\mathsf{OUT}(a')$ such that $a''=t(f_{a'}(e_1))$ is ordinary. We continue like this, repeating as many times as needed until we terminate either at the initial vertex $a$ or at a source. In the first case, we obtains a closed path $p$, consisting of edges $e_k, e_{k-1, }\ldots, e_1$, such that :
\begin{enumerate}
\item all vertices $a=a_1=t(e_1), \ldots, a_k=t(e_k)$ are ordinary;
\item $f_{a_k}(e_k)=e_1$ and $f_{a_i}(e_i)=e_{i+1}$,  $1\leq i\leq k-1$.
\end{enumerate}
Any path like that is called an {\it $F$-related} path.

Let us consider a subproduct
\[P=\otimes_{1\leq j\leq k}(\sum_{1\leq i\leq d_{\alpha}(a_j)}\Pi^{b(a_j)}v_i|e_j|\otimes \Pi^{b(a_j)}v_i^*|f_{a_j}(e_j)|)\]
of $F$, i.e., $F=PF'$, where $F'$ is a vertex decomposable semi-invariant of the total degree strictly less than $t$.

For the sake of simplicity, denote $\Pi^{b(h(e))}v_i^*|e|$ by $w_i^*|e|$, and $\Pi^{b(t(e))}v_j|e|$ by $w_j|e|$. Also, denote $d_i=d_{\alpha}(a_i)$ for $1\leq i\leq k$. Then
\[
\begin{aligned}
P=&\sum_{1\leq i_s\leq d_s, 1\leq s\leq k} w_{i_1}|e_1|\otimes w^*_{i_1}|e_2|\otimes\cdots\otimes w_{i_k}|e_k|\otimes w^*_{i_k}|e_1 |\\
=&\sum_{1\leq i_s\leq d_s, 1\leq s\leq k}(-1)^{|w_{i_k}||e_k|}\Pi^v x_{i_k, i_1}(e_1)\Pi^b x_{i_1, i_2}(e_2)\cdots\Pi^b x_{i_{k-1}, i_k}(e_k)\\
=&\, \mathrm{str}\big(\Pi^b X(e_1)\cdots\Pi^b X(e_k)\big)=\mathrm{str}\big(\Pi ^b X(p)\big).
\end{aligned}
\]
By induction on $t$, we can assume that for any ordinary vertex $a$ the above procedure terminates at a source. In other words, there are no $F$-related paths. In this case, $F$ is called a {\it bipartite} semi-invariant.

Now assume that $F$ is a bipartite semi-invariant and $a$ is a sink.

The Kirchhoff rule implies that, for each $e\in\mathsf{IN}(a)$, there is a unique path $p_e$ consisting of edges $e_{k(e)}, \ldots , e_1=e,$ such that $a_{k(e)}=t(e_{k(e)})$ is a source, all vertices $a_i=t(e_i)$ for $1\leq i\leq k(e)-1$ are ordinary, and $f_{a_i}(e_i)=e_{i+1}$. Conversely, for any source $c$ and an arbitrary edge $e'\in\mathsf{OUT}(c)$, there is a unique sink $a$ and a unique edge $e\in\mathsf{IN}(a)$ such that $c$ is the initial vertex and $e'$ is the first edge of $p_e$. In particular, the sets $\mathsf{IN}(Q)$and $\mathsf{OUT}(Q)$ have the same cardinality. Every edge of $Q$ appears as an edge in some $p_e$. Moreover, different elements $p_e$ have no common edge.

To describe the structure of $F$, as a vertex decomposable semi-invariant, we need some additional notations.

First of all, for every $a\in\mathsf{SINK}(Q)$ there is a partition $\{T_j(a)\}_{1\leq j\leq k_{\alpha}(a)}$ of the subset $\mathsf{IN}(a)$, and for every $c\in\mathsf{SOURCE}(Q)$ there is a partition $\{T'_j(c)\}_{1\leq j\leq k_{\alpha}(b)}$ of the subset $\mathsf{OUT}(c)$, uniquely defined by $F$.
Choose a map 
\[
\mathsf{IN}(Q)\to\sqcup_{a\in\mathsf{SINK}(Q)}[1, d_{\alpha}(a)]
\] 
such that its restrictions on each $T_j(a)$ is a bijection onto $[1, d_{\alpha}(a)]$, and a map
$\mathsf{OUT}(Q)\to\sqcup_{c\in\mathsf{SOURCE}(Q)}[1, d_{\alpha}(c)]$ such that its restriction on each $T'_j(c)$ is a bijection onto $[1, d_{\alpha}(c)]$.
The image of $e\in\mathsf{IN}(Q)\sqcup\mathsf{OUT}(Q)$ under one of these maps is denoted by $\bar e$. Then
\[
\begin{aligned}
F=&\sum_{\sigma\in S_T, \tau\in S_{T'}}(-1)^{sign(\sigma)+sign(\tau)}\prod_{e\in\mathsf{IN}(Q)}\sum_{i_1, \ldots, i_{k(e)}} w^*_{\sigma(\overline{e_1})}|e_1|\otimes w_{i_1}|e_1|\otimes \\
&w^*_{i_1}|e_2|\otimes \cdots\otimes w_{i_{k(e)}}|e_{k(e)-1}|\otimes w^*_{i_{k(e)}}|e_{k(e)}|\otimes w_{\tau(\overline{e_{k(e)}})}|e_{k(e)}|,
\end{aligned}
\]
where $S_T=\times_{a\in\mathsf{SINK}(Q)} S_{d_{\alpha}(a)}^{k_{\alpha}(a)}$ and $S_{T'}=\times_{a\in\mathsf{SOURCE}(Q)} S_{d_{\alpha}(a)}^{k_{\alpha}(a)}$.
A substitution
\[
\sigma=\times_{a\in\mathsf{SINK}(Q)}(\times_{1\leq j\leq k_{\alpha}(a)}\sigma_j(a))\in S_T
\]
acts on $e_1\in T_j(h(e_1))$ as $w^*_{\sigma(\overline{e_1})}|e_1|=w^*_{\sigma_j(h(e_1))(\overline{e_1})}|e_1|$. Analogously, a substitution
\[\tau=\times_{c\in\mathsf{SOURCE}(Q)}(\times_{1\leq j\leq k_{\alpha}(c)}\tau_j(a))\in S_{T'}\]
acts on $e_{k(e)}\in T_j(t(e_{k(e)}))$ as $w_{\tau(\overline{e_{k(e)}})}|e_{k(e)}|=w_{\tau_j(t(e_{k(e)}))(\overline{e_{k(e)}})}|e_{k(e)}|$.

Finally, for every $e\in\mathsf{IN}(Q)$ and $1\leq s\leq k$, the index $i_s$ varies within the interval $[1, d_{\alpha}(h(e_s))]$.

Further, we can rewrite $F$ as
\[
\begin{aligned}
F=&\sum_{\sigma\in S_T, \tau\in S_{T'}}(-1)^{sign(\sigma)+sign(\tau)}\prod_{e\in\mathsf{IN}(Q)}\sum_{i_1, i_2, \ldots, i_{k(e)}} \Pi^b x_{\sigma(\overline{e_1}), i_1}(e_1)\cdots \Pi^b x_{i_{k(e)}, \tau(\overline{e_{k(e)}})}\\
=&\sum_{\sigma\in S_T, \tau\in S_{T'}}(-1)^{sign(\sigma)+sign(\tau)}\prod_{e\in\mathsf{IN}(Q)}\Pi^b x_{\sigma(\overline{e_1}), \tau(\overline{e_{k(e)}})}(p_e).
\end{aligned}
\]
In other words, $F$ can be interpreted as a multi-linear bipartite semi-invariant of a
bipartite quiver $Q'$ such that
\[
Q'_0=\mathsf{SINK}(Q)\sqcup\mathsf{SOURCE}(Q)\quad  \text{ and }\quad  Q'_1=\{p_e\mid e\in\mathsf{IN}(Q)\},
\]
where the functions $t$ and $h$ are defined as $t(p_e)=t(e_{k(e)})$ and $h(p_e)=h(e_1)$, and
$p_e$ consists of the edges $e_{k(e)}, \ldots, e_1$. Theorem 4.1 from \cite{domzub} concludes the proof.
\end{proof}
\begin{corollary}\label{strangephenomenon}
If $Q$ has no extremal vertices with respect to the super-dimension vector $\alpha$, then
$\mathsf{SSI}_Q(\alpha, b)=\mathsf{SI}_Q(\alpha, b)$. Moreover, if $Q$ is acyclic, then $\mathsf{SSI}_Q(\alpha, b)=K$.
\end{corollary}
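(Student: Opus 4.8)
The plan is to deduce both claims directly from Theorem~\ref{polynomial semi-invariants}, which asserts that $\mathsf{SSI}_Q(\alpha, b)$ is generated by the supertraces $\mathrm{str}(\Pi^b X(p))$ over closed paths $p$, together with the determinant-like semi-invariants. The strategy is to show that, under the stated hypotheses, the second family of generators degenerates: no determinant-like semi-invariant can be nonscalar.

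First I would recall the construction of a determinant-like semi-invariant. Such an invariant arises from a choice of subsets $A\subseteq\mathsf{SINK}(Q)$ and $C\subseteq\mathsf{SOURCE}(Q)$, where — crucially — every vertex in $A\cup C$ is required to be \emph{extremal} for $\alpha$ (and moreover the relevant superspaces must be purely even). So the first step is the observation: if $Q$ has no extremal vertices with respect to $\alpha$, then in particular there is no extremal sink and no extremal source, hence $A$ and $C$ are forced to be empty, and no determinant-like semi-invariant exists (beyond the trivial one, the empty product, which is a scalar). By Theorem~\ref{polynomial semi-invariants}, $\mathsf{SSI}_Q(\alpha, b)$ is therefore generated by supertraces of closed paths alone. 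Since each $\mathrm{str}(\Pi^b X(p))$ lies in $\mathsf{SI}_Q(\alpha, b)$ (as noted in the ``Typical (semi)invariants'' subsection), and $\mathsf{SI}_Q(\alpha, b)\subseteq\mathsf{SSI}_Q(\alpha, b)$, the two superalgebras coincide. This gives the first assertion.

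For the second assertion, assume in addition that $Q$ is acyclic. Then $Q$ has no closed paths at all, so the only surviving generators from Theorem~\ref{polynomial semi-invariants} are the determinant-like ones — and we have just seen there are none (other than scalars). Hence $\mathsf{SSI}_Q(\alpha, b)$ is generated by the empty set, i.e.\ $\mathsf{SSI}_Q(\alpha, b)=K$.

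The only point that needs a little care — and the step I expect to be the main (mild) obstacle — is the reduction built into Theorem~\ref{polynomial semi-invariants}: the proof of that theorem first replaces $(Q,\alpha,b)$ by $(\widetilde Q,\widetilde\alpha,\widetilde b)$ via sink/source normalizing so that every extremal vertex becomes a source or a sink, and one must check that the hypotheses of the corollary are preserved, or rather that the conclusion survives pulling back along the surjection $\mathsf{SSI}_{\widetilde Q}(\widetilde\alpha,\widetilde b)\to\mathsf{SSI}_Q(\alpha, b)$. But this is straightforward: if $Q$ has no extremal vertex, then \emph{no} sink/source normalizing is performed (the procedure is only invoked at extremal vertices), so $\widetilde Q=Q$; and acyclicity is likewise untouched. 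One should also note that a determinant-like semi-invariant of $Q$ is by definition the image of a determinant-like invariant of $\widetilde Q$, so the argument above applies verbatim at the level of $\widetilde Q=Q$. Thus no genuine difficulty arises, and the corollary follows.
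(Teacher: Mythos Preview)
Your proposal is correct and is exactly the argument the paper intends: the corollary is stated immediately after Theorem~\ref{polynomial semi-invariants} with no separate proof, so it is meant to follow precisely by the observation you give --- with no extremal vertices the determinant-like generators are vacuous, leaving only supertraces (hence $\mathsf{SSI}_Q(\alpha,b)=\mathsf{SI}_Q(\alpha,b)$), and acyclicity then kills the supertraces too. Your remark that no sink/source normalizing occurs when there are no extremal vertices (so $\widetilde Q=Q$) is the right way to dispose of that potential subtlety.
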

This corollary shows that the polynomial semi-invariants of super-representations of a quiver reflect
its {\it geometrical} properties, but in a drastically different way than the polynomial semi-invariants of its usual representations.

This strange phenomenon does not take place if we consider {\it rational} semi-invariants of super-representations of a quiver. Regardless whether all vertices are ordinary or not, one can construct a non-scalar rational semi-invariant as in the following example.
\begin{ex}\label{phenomena}
Let $Q$ be the Kronecker quiver with the vertex set $Q_0=\{a, b\}$, the edge set $Q_1=\{e_1, e_2\}$ and $h(e_i)=b, t(e_i)=a$ for $i\in\{0, 1\}$.

Assume the super-dimension vector $\alpha=(p|0, q|0)$. Then $\mathsf{SRep}_{\alpha}(Q)=\mathsf{Rep}_{\alpha}(Q)$. Both generic matrices $X(e_1)$ and $X(e_2)$ are of the size
$q\times p$. Choose a pair of positive integers $s, l$ such that $qs=pl=t$ and construct a matrix $X$ of the size $t\times t$, partitioned into $s\times l$ blocks of size $q\times p$. The block at the position $(i, j)$, where
$1\leq i\leq s, 1\leq j\leq l$, is defined as $\beta_{ij}X(e_1)+\gamma_{ij}X(e_2)$, where the coefficients $\beta_{ij}, \gamma_{ij}$ are algebraically independent over the fraction field of $K[\mathsf{Rep}_{\alpha}(Q)]$. Then $\det(X)\neq 0$ if and only if there are values of the variables  $\beta_{ij}, \gamma_{ij}$ in the field $K$, say $b_{ij}, g_{ij}$,  for which
\[\det(X)|_{\beta_{ij}\mapsto b_{ij}, \gamma_{ij}\mapsto g_{ij}, 1\leq i\leq s, 1\leq j\leq l}\]
is a non-zero semi-invariant of $Q$.

Assume the super-dimension vector $\alpha=(m|n, p|q)$ is such that $mn\neq 0$ and  $pq\neq 0$. Then both vertices of the quiver are ordinary for such $\alpha$, and $\mathsf{SSI}_Q(\alpha)=K$.
Additionally, assume that there is a pair of positive integers $s, l$ such that $sm|sn=lp|lq$. We construct a matrix $X$ of the size $t\times t$,
partitioned into $s\times l$ blocks of "super-size" $p|q\times m|n$. We define the block at the position $(i, j)$, where $1\leq i\leq s$, $1\leq j\leq l$, as $\beta_{ij}X(e_1)+\gamma_{ij}X(e_2)$, where all coefficients $\beta_{ij}, \gamma_{ij}$ are algebraically independent over the fraction field of
$K[\mathsf{Rep}_{\alpha}(Q)]_0$. Up to some rearrangement of its rows and columns, $X$ is an ${sm|sn\times sm|sn}$ super-matrix. Moreover, $\Ber_{sm|sn}(X)$ is defined correctly, i.e., $X$ is invertible if and only if for some values of variables  $\beta_{ij}, \gamma_{ij}$ in the field $K$, say $b_{ij}, g_{ij}$,  the element
\[\Ber_{sm|sn}(X)|_{\beta_{ij}\mapsto b_{ij}, \gamma_{ij}\mapsto g_{ij}, 1\leq i\leq s, 1\leq j\leq l}\]
is invertible. In particular, the latter element is a non-zero rational semi-invariant of $Q$.

\end{ex}

Finally, arguing as in Theorem \ref{polynomial semi-invariants} and using Theorem 1.1 from \cite{sergeev}, one can easily derive the following statement (see also \cite{brprocesi, don}).
\begin{corollary}\label{polynomialinvariants}
The superalgebra $\mathsf{SI}_Q(\alpha, b)$ is generated by the invariants $\mathrm{str}(\Pi^b X(p))$, where $p$ is a closed path in $Q$.
\end{corollary}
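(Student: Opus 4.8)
The plan is to mirror the proof of Theorem \ref{polynomial semi-invariants} almost verbatim, replacing the input from \cite{sergeev} on semi-invariants of the special linear supergroup (Theorem 1.2) with the companion statement on invariants of the general linear supergroup (Theorem 1.1). First I would observe that, as in the semi-invariant case, the reduction by polarization operators (identity $(\star)$) lets us assume $f$ is multi-linear, and the reduction via Proposition \ref{reduction} and the subsequent remark lets us assume that every extremal vertex of $Q$ is either a source or a sink. Next, for the multi-linear component of $\mathsf{SI}_Q(\alpha, b)$ I would write, just as before, using connectedness of each $\GL(\alpha(a))$ — or rather the analogue of the Lie-algebra reduction, here the passage to $\mathfrak{gl}(\alpha(a))$-invariants, which for a reductive/algebraic connected supergroup is legitimate in characteristic zero — the decomposition
\[
\Big(\otimes_{a\in Q_0}((\Pi^{b(a)}V(a)^*)^{\otimes\mathsf{in}(a)}\otimes \Pi^{b(a)}V(a)^{\otimes\mathsf{out}(a)})\Big)^{\GL(\alpha)}\simeq\otimes_{a\in Q_0} \big((\Pi^{b(a)}V(a)^*)^{\otimes\mathsf{in}(a)}\otimes \Pi^{b(a)}V(a)^{\otimes\mathsf{out}(a)}\big)^{\GL(\alpha(a))}.
\]
The key point is then that Theorem 1.1 of \cite{sergeev} describes the $\GL(m|n)$-invariants of $(V^*)^{\otimes r}\otimes V^{\otimes s}$: they vanish unless $r=s$ (the ``Kirchhoff rule'' $\mathsf{in}(a)=\mathsf{out}(a)$ holds at every vertex), and when $r=s$ they are spanned, after reordering tensor factors with the sign rule, by the ``contraction'' elements $\otimes_{e\in\mathsf{OUT}(a)}\big(\sum_i \Pi^{b(a)}v_i|e|\otimes\Pi^{b(a)}v_i^*|f_a(e)|\big)$ indexed by bijections $f_a:\mathsf{OUT}(a)\to\mathsf{IN}(a)$. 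Crucially, unlike the $\mathfrak{sl}$ case, there are no ``determinant'' generators at extremal vertices — the $\GL$-invariance forces in-degree to equal out-degree even there, so an extremal source or sink with positive degree simply cannot occur, and the whole quiver effectively has only ordinary vertices from the point of view of invariants.

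With this in hand, the remainder copies the argument in Theorem \ref{polynomial semi-invariants}: a multi-linear vertex-decomposable invariant $F$ is a product over the vertices of the contraction elements above, and one traces through the bijections $f_a$ to decompose $\mathsf{OUT}$-edges into cycles. Because every vertex now satisfies the Kirchhoff rule and behaves like an ordinary vertex, the ``$F$-related path'' construction always closes up into a \emph{closed} path $p$ (there are no sources or sinks to terminate at), and the computation
\[
P=\sum_{1\leq i_s\leq d_s}w_{i_1}|e_1|\otimes w^*_{i_1}|e_2|\otimes\cdots\otimes w_{i_k}|e_k|\otimes w^*_{i_k}|e_1|=\mathrm{str}\big(\Pi^b X(p)\big)
\]
shows that $F$ factors as a product of supertraces $\mathrm{str}(\Pi^b X(p))$ over closed paths. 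Induction on the total degree $t$ peels off one supertrace at a time, so $\mathsf{SI}_Q(\alpha, b)$ is generated by these supertraces; since each such supertrace obviously lies in $\mathsf{SI}_Q(\alpha, b)$ (it is $\GL(\alpha)$-invariant, being built from the adjoint-type action conjugating $\Pi^bX(p)$), the two inclusions give equality.

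I expect the main obstacle — really the only nontrivial point — to be the passage from $\GL(\alpha)$-invariants to $\mathfrak{gl}(\alpha)$-invariants and the corresponding identification with the superalgebras $\mathfrak{A}$ of \cite{sergeev}, together with a careful bookkeeping of the sign rule when the tensor factors are permuted into the order $(\Pi^{b(a)}V(a)^*)^{\otimes\mathsf{in}(a)}\otimes\Pi^{b(a)}V(a)^{\otimes\mathsf{out}(a)}$; in characteristic zero the first is handled exactly as in the proof of Proposition \ref{reduction} (connectedness plus semisimplicity, or Lemma 9.5 of \cite{zub1}), and the second is routine but must be done consistently with the conventions already fixed in \S2 and \S4. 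Everything else is a transcription of the proof of Theorem \ref{polynomial semi-invariants} with the determinant-like pieces simply absent, which is why the statement is cleaner. For this reason I would present the proof as ``arguing as in Theorem \ref{polynomial semi-invariants} and using Theorem 1.1 from \cite{sergeev}'', spelling out only the absence of extremal contributions and the reduction of every vertex to the ordinary (Kirchhoff) case.
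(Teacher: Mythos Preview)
Your proposal is correct and matches the paper's approach exactly: the paper itself gives no proof beyond the sentence ``arguing as in Theorem \ref{polynomial semi-invariants} and using Theorem 1.1 from \cite{sergeev}'', which is precisely what you spell out. The one superfluous step is the appeal to Proposition \ref{reduction}, since for $\GL$-invariants Theorem 1.1 of \cite{sergeev} already forces the Kirchhoff rule at \emph{every} vertex (extremal or not), so no sink/source normalizing is needed and the $F$-related paths close up immediately.
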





\begin{thebibliography}{99}

\bibitem{brkuj} J.~Brundan and J.~Kujawa, {\em A new proof of the Mullineux conjecture}, J. Algebraic Combin. 18 (2003) 13--39.

\bibitem{brprocesi} L.~Le Bruyn and C.~Procesi, {\em Semisimple representations of quivers}, Trans. Amer. Math. Soc., 317 (1990), 585--598.

\bibitem{ccf}C.~Carmeli,\ L.~Caston,\ R.~Fioresi, {\em Mathematical foundations of supersymmetry},
EMS Series of Lectures in Mathematics, European Math. Soc., Z\"{u}rich, 2011.

\bibitem{craw} W.~Crawley-Booovey, {\em Lectures on representations of quivers}, http://www1.maths. leeds. ac. uk/ pmtwo.

\bibitem{dom1} M.~Domokos, {\em Invariants of quivers and wreath products}, Comm. Alg. 26 (1998), 2807--2819.

\bibitem{don} S.~Donkin, {\em Polynomial invariants of representations of quivers}, Comment. Math. Helv., 69 (1994), 137--141.

\bibitem{dg} M.~Demazure,\ P.~Gabriel,\ {\em Groupes alg\'{e}briques I}, Masson \& Cie, Editeur-Paris North-Holland Publishing Company-Amsterdam, 1970.

\bibitem{domzub} M.~Domokos, A.N.~Zubkov, {\em Semi-invariants of quivers as determinants}, Transformation Groups, Vol. 6, No. 1 (2001), pp. 9--24.

\bibitem{derkweym} H.~Derksen, J.~Weyman, {\em Semi-invariants of quivers and saturation for Littlewood Richardson coefficients},
J. Amer. Math. Soc. 13 (2000), 467-479.

\bibitem{gab} P.~Gabriel, {\em Unzerlegbare Darstellungen I}, Manuscr. Math. 6 (1972), 71--103.

\bibitem{gross} F.D.~Grosshans, {\em Algebraic Homogeneous Spaces and Invariant Theory}, Lecture
Notes in Mathematics, Vol.1673, Springer-Verlag, Berlin, 1997.

\bibitem{jan} J.~Jantzen, {\em Representations of Algebraic Groups}, Pure and Applied Mathematics,
Vol. 131, Academic Press, Boston, MA, 1987.

\bibitem{ki} A.D.~King, {\em Moduli of representations of finite dimensional algebras}, Quart. J. Math. Oxford (2), 45 (1994), 515--530.

\bibitem{kac} V.G.~Kac, {\em Lie superalgebras}, Advances in Math. 26 (1977), no. 1, 8--96.

\bibitem{ka} V.G.~Kac, {\em Infinite root systems, representations of graphs and invariant theory}, Invent. Math. 56 (1980), 57--92.

\bibitem{khudvor} H.M.~Khudaverdian and Th. Th. Voronov, {\em Berezinians, exterior powers and recurrent sequences}, Lett. Math. Phys., 74 (2005), 201--228.

\bibitem{man} Yu.~I.~Manin, {\em Gauge field theory and complex geometry}, Grundlehren der mathematischen Wissenschaften 289, second edition, Springer, 1997.

\bibitem{schof} A.~Schofield, {\em Semi-invariants of quivers}, J. London Math. Soc. (2) 43 (1991) 385--395.

\bibitem{schofberg} A.~Schofield, M. van den Bergh, {\em Semi-invariants of quivers for arbitrary dimension vectors}, Indag. Math. (N.S.) 12(2001), No. 1, 125--138.

\bibitem{sergeev} A.~Sergeev, {An analog of the classical invariant theory for Lie superalgebras}, Michigan Math.J., 49 (2001), 113-146.

\bibitem{varadarajan}
V.S.~ Varadarajan,\
\emph{Supersymmetry for mathematicians: an introduction},\
Courant Lec. Notes in Math. \textbf{11}, Courant Institute of Mathematical Sciences,
New York; Amer. Math. Soc, Providence, 2004.

\bibitem{zub} A.N.~Zubkov,
{\em The Razmyslov-Procesi theorem for quiver representations
}, Fundam. Prikl. Mat., 7(2001), No. 2, 387–421 (Russian).

\bibitem{zub1} A.N.~Zubkov,
{\em Affine quotients of supergroups}.
Transform.\ Groups, 14 (2009), no.3, 713--745.

\bibitem{zub2} A.N.~Zubkov, {\em Some homological properties of $\GL(m|n)$ in arbitrary characteristic}, Journal of Algebra and Its Applications, Vol. 15, No. 7 (2016) 1650119 (26 pages).

\bibitem{zubshest} A.N.~Zubkov, I.P.~Shestakov, {\em Invariants of $G_2$ and $Spin(7)$ in positive characteristic}, Transformation Groups, 23(2018), no.2, 555-588.
\end{thebibliography}
\end{document}